\newcommand{\bcen}{\begin{center}}     \newcommand{\ecen}{\end{center}}
\newcommand{\bay}{\begin{array}}      \newcommand{\eay}{\end{array}}
\newcommand{\beq}{\begin{eqnarray*}}      \newcommand{\eeq}{\end{eqnarray*}}
\def\add{\mathrm{add}}
\def\End{\mathrm{End}}
\def\Ext{\mathrm{Ext}}
\def\gl{\mathrm{gl.dim}}
\def\Hom{\mathrm{Hom}}
\def\Ker{\mathrm{Ker}}
\def\mod{\mathrm{mod}}
\def\Mod{\mathrm{Mod}}
\def\pd{\mathrm{pd}}
\def\proj{\mathrm{proj}}
\def\Proj{\mathrm{Proj}}
\def\Gproj{\mathrm{Gproj}}
\begin{document}

\newtheorem{theorem}{Theorem}
\newtheorem{proposition}{Proposition}
\newtheorem{lemma}{Lemma}
\newtheorem{corollary}{Corollary}
\newtheorem{remark}{Remark}
\newtheorem{example}{Example}
\newtheorem{definition}{Definition}
\newtheorem*{conjecture}{Conjecture}
\newtheorem{question}{Question}

\title{\large\bf Recollements, Cohen-Macaulay Auslander algebras and Gorenstein projective conjecture}

\author{\large Yongyun Qin}

\date{\footnotesize College of Mathematics and Statistics,
Qujing Normal University, \\ Qujing, Yunnan 655011, China. E-mail:
qinyongyun2006@126.com
}

\maketitle

\begin{abstract}
It is shown that a $4$-recollement of derived categories
of CM-finite algebras induces a $2$-recollement of the corresponding
Cohen-Macaulay Auslander algebras, which
generalises the main theorem of Pan [S. Y. Pan, Derived equivalences for Cohen-Macaulay Auslander algebras,
J. Pure Appl. Algebra 216 (2012), 355--363].
Moreover, both Auslander-Reiten conjecture and Gorenstein projective conjecture are
shown invariant under
$3$ (or $4$)-recollement of unbounded derived categories of algebras.
\end{abstract}

\medskip

{\footnotesize {\bf Mathematics Subject Classification (2010)}:
16G10; 18E30}

\medskip

{\footnotesize {\bf Keywords} : derived category; recollement;
Cohen-Macaulay Auslander algebra,
\linebreak
Auslander-Reiten conjecture, Gorenstein projective conjecture. }

\section{\large Introduction}
\indent\indent Recollements of triangulated categories, introduced by Beilinson et al.
\cite{BBD82}, are widely used in algebraic geometry and
representation theory. A recollement of triangulated categories
describes one category
as being ``glued together'' from two others, where these three categories
were related by sixes adjoint functors. In particular, a recollement
is called an $n$-recollement
if there are more than sixes functors \cite{QH16}, and the collections of adjacent recollements
are called ladders \cite{BGS88,AKLY17}. $n$-recollements of unbounded
derived categories of algebras
unify the recollements of different kinds of bounded derived categories:
a recollement of $\mathcal{D}^-(\mod)$ is a $2$-recollement
of $\mathcal{D}(\Mod )$, a recollement of $K^b(\proj)$ is a $3$-recollement
of $\mathcal{D}(\Mod )$, and a recollement of $\mathcal{D}^b(\mod)$ is a $3$-recollement
of $\mathcal{D}(\Mod )$ if the algebras are
finite-dimensional over a field $k$.
On the other hand, the language of n-recollements has proved
powerful in clarifying the relationships between
recollements of derived categories and certain homological properties of algebras
\cite{QH16}. We refer to \cite{GP16,ZZZZ16} for more
related topics on ladders and $n$-recollements.

In \cite{Pan12}, Pan proved that if $A$ and $B$ are CM-finite Gorenstein algebras
that are derived equivalent, then their Cohen-Macaulay
Auslander algebras are also derived equivalent.
Later, Pan and Zhang obtained a similar result for two CM-finite algebras that are
standard derived equivalent \cite{PZ15}.
In this paper, we generalise their results in terms of recollements.

{\bf Theorem A } {\it
Let $A$, $B$ and $C$ be Artin $R$-algebras such that
$B$ and $C$ are CM-finite. If
$\mathcal{D}(\Mod A)$ admits a $4$-recollement relative to
$\mathcal{D}(\Mod B)$ and $\mathcal{D}(\Mod C)$,
then $\mathcal{D}(\Mod H)$
admits a $2$-recollement relative to
$\mathcal{D}(\Mod \Gamma _C)$ and $\mathcal{D}(\Mod \Gamma _B)$,
where $\Gamma _B$ and $\Gamma _C$ are the corresponding Cohen-Macaulay Auslander algebras,
and $H$ is an algebra constructed from the given recollement
(see Theorem~\ref{theorem-1} for detail).}

The second part of this paper is on recollements
and homological conjectures. Here, we focus on
Auslander-Reiten conjecture (ARC) and Gorenstein projective conjecture (GPC).
ARC is invariant under tilting equivalences \cite{Wei10} and
general derived equivalences \cite{Pan14}, and GPC is
invariant under standard derived equivalences \cite{PZ15}.
So, it is natural to investigate the invariance of these conjectures along recollements
of derived categories. In this paper, we get the following theorem
which is a combination of Theorem~\ref{theorem-3-GPC-ARC}, Theorem~\ref{theorem-3},
Corollary~\ref{corollary-4-ARC-GPC} and Theorem~\ref{theorem-4}.

{\bf Theorem B} {\it
Let $A$, $B$ and $C$ be Artin $R$-algebras, and $\mathcal{D}A$ admit an $n$-recollement relative
to $\mathcal{D}B$ and $\mathcal{D}C$.

{\rm (1)} $n=3:$ If $A$ satisfies ARC (resp. GPC), then so does $B$; If $\gl C<\infty$, then
$A$ satisfies ARC (resp. GPC)
if and only if so does $B$.

{\rm (2)} $n=4:$ If $A$ satisfies ARC (resp. GPC), then so do $B$ and $C$;
If $C$
is CM-free (resp. $\underline{^\bot C}=0$) and $B$ satisfies GPC (resp. ARC),
then $A$ satisfies GPC (resp. ARC), and the statement is also
true if we exchange the roles of $B$ and $C$. }

In \cite{PZ15}, the author asked whether GPC is
invariant under any derived equivalence. Due to Theorem B,
we give a positive answer. Moreover, we will use an example to
show how Theorem B can be used.

In \cite{AKLY17}, the authors showed that there exists
a $1$-recollement which can not be extended to a $2$-recollement,
a $2$-recollement but not a $3$-recollement,
and a $3$-recollement but not a $4$-recollement.
So, some experts ask whether there exists a $4$-recollement
but not a $5$-recollement, or,
any $4$-recollement can always be extended to a $\infty$-recollement.
At the end of this paper, we obtain a
$4$-recollement but not
a $5$-recollement, which may be useful in the further
development of ladders and $n$-recollements.

The paper is organized as follows: Section 2 recalls some
relevant definitions and conventions.
Section 3 and section 4 are the proof of Theorem A
and Theorem B respectively. The last section are
two examples, one is a
$4$-recollement but not
a $5$-recollement, and the other is reducing Auslander-Reiten conjecture by
recollement.

\section{\large Notations and definitions}
\indent\indent Let $A$ be an Artin algebra. Denote by $\Mod A$ the
category of right $A$-modules, and by $\Proj A$ (resp. $\mod A$ and
$\proj A$) its full subcategory consisting of all projective modules
(resp. finitely
generated modules and finitely
generated projective modules).
For $* \in \{{\rm nothing}, -, +, b \}$, denote by $\mathcal{D}^*(\Mod
A)$ (resp. $\mathcal{D}^*(\mod A)$) the derived category of cochain complexes of objects in $\Mod
A$ (resp. $\mod A$) satisfying the corresponding boundedness condition. Let
$K^b(\proj A)$ be the homotopy category of
bounded complexes of objects in $\proj A$.
We often view it as a full subcategory of $\mathcal{D}(\Mod A)$ and identify it with its essential image.
Up to isomorphism, the objects in $K^{b}(\proj A)$ are
precisely all the compact objects in $\mathcal{D}(\Mod A)$.
Usually, we
just write $\mathcal{D} A$ instead of $ \mathcal{D}(\Mod A)$.

From \cite{Buc87}, the singularity category $D_{sg}(A)$ of $A$ is defined to be
the Verdier quotient $D^b(\mod A)/K^b(\proj A)$.

A complete $A$-projective resolution is an exact sequence of projective $A$-modules
$$(P^\bullet,d) : \cdots \longrightarrow P^{-1} \longrightarrow
P^{0} \longrightarrow P^{1} \longrightarrow
\cdots $$
such that $\Hom _A(P^\bullet,A)$ is also exact. An $A$-module $M$
is {\it Gorenstein-projective}, if there exists a complete $A$-projective resolution $P^\bullet$
such that $M =\Ker d_0$.
Denote by $\Gproj A$ the full subcategory of $\mod A$ consisting
of all Gorenstein-projective modules, and $K^b(\Gproj A)$ the homotopy category of
bounded complexes of objects in $\Gproj A$.
By Buchweitz's Theorem,
$\Gproj A$ is a Frobenius category with the projective modules as the projective-injective
objects, so the stable category $\underline{\Gproj} A$ is a triangulated
category.

An Artin algebra $A$ is {\it CM-free} if $\Gproj A = \proj A$,
and it is {\it finite Cohen-Macaulay type (or CM-finite)} if there are only finitely many
indecomposable objects in $\Gproj A$, up to
isomorphism \cite[Example 8.4 (2)]{Bel05}. In the latter case,
let $M$ be an additive generator of $\Gproj A$, that is, $\add(M) =
\Gproj A$. Then $\Gamma _A:= \End _A(M) = \Hom _A (M,M)$ is called the {\it Cohen-Macaulay
Auslander algebra} of $A$ \cite{Bel05, Bel11}.

\begin{definition}{\rm (Beilinson-Bernstein-Deligne \cite{BBD82})
Let $\mathcal{T}_1$, $\mathcal{T}$ and $\mathcal{T}_2$ be
triangulated categories. A {\it recollement} of $\mathcal{T}$
relative to $\mathcal{T}_1$ and $\mathcal{T}_2$ is given by
$$\xymatrix@!=4pc{ \mathcal{T}_1 \ar[r]^{i_*=i_!} & \mathcal{T} \ar@<-3ex>[l]_{i^*}
\ar@<+3ex>[l]_{i^!} \ar[r]^{j^!=j^*} & \mathcal{T}_2
\ar@<-3ex>[l]_{j_!} \ar@<+3ex>[l]_{j_*}}\ \ \ \ \ \ \ \ \ \ \ \ (R)$$
and denoted by $(\mathcal{T}_1, \mathcal{T}, \mathcal{T}_2,
i^*,i_*=i_!, i^!, j_!,j^!=j^*, j_* )$ (or just $(\mathcal{T}_1, \mathcal{T}, \mathcal{T}_2)$)
such that

(R1) $(i^*,i_*), (i_!,i^!), (j_!,j^!)$ and $(j^*,j_*)$ are adjoint
pairs of triangle functors;

(R2) $i_*$, $j_!$ and $j_*$ are full embeddings;

(R3) $j^!i_*=0$ (and thus also $i^!j_*=0$ and $i^*j_!=0$);

(R4) for each $X \in \mathcal {T}$, there are triangles

$$\begin{array}{l} j_!j^!X \rightarrow X  \rightarrow i_*i^*X  \rightarrow
\\ i_!i^!X \rightarrow X  \rightarrow j_*j^*X  \rightarrow
\end{array},$$ where the arrows to and from $X$ are the counits and the
units of the adjoint pairs respectively.
}\end{definition}

An {\it upper (resp. lower) recollement} of $\mathcal{T}$
relative to $\mathcal{T}_1$ and $\mathcal{T}_2$ is the upper (resp. lower) two rows of (R)
such that $i^*,i_*,j_!$ and $j^!$ satisfy the above conditions \cite{BGS88,Koe91,Par89}.
The upper (resp. lower) recollement are
also called left (resp. right) recollement in some literature.

\begin{definition}{\rm
Let $\mathcal{T}_1$, $\mathcal{T}$ and $\mathcal{T}_2$ be
triangulated categories, and $n$ a positive integer. An {\it
$n$-recollement} of $\mathcal{T}$ relative to $\mathcal{T}_1$ and
$\mathcal{T}_2$ is given by $n+2$ layers of triangle functors
$$\xymatrix@!=4pc{ \mathcal{T}_1 \ar@<+1ex>[r]|{i_2} \ar@<-3ex>[r]|{i_4}_\vdots & \mathcal{T}
\ar@<+1ex>[r]|{j_2} \ar@<-3ex>[r]|{j_4}_\vdots \ar@<-3ex>[l]|{i_1}
\ar@<+1ex>[l]|{i_3} & \mathcal{T}_2 \ar@<-3ex>[l]|{j_1}
\ar@<+1ex>[l]|{j_3}}$$ such that every consecutive three layers form
a recollement, and denoted by $(\mathcal{T}_1, \mathcal{T},
\mathcal{T}_2, i_1,i_2, \cdots ,i_{n+2}, j_1, j_2, \cdots ,
j_{n+2})$.
This definition is taken from ladder \cite{BGS88, AKLY17} with a minor modification.}\end{definition}

\section{\large Recollements and Cohen-Macaulay Auslander algebras}
\indent\indent In this section, we show that a $4$-recollement of unbounded derived categories
of algebras induces a lower recollement of $\underline{\Gproj }$-level,
and then, we prove Theorem A. Let's begin with some definition in \cite{HP17}.

Suppose that $A$ and $B$ are Artin algebras. Following \cite{HP17},
a triangle functor $F: D^b(\mod A)\rightarrow D^b(\mod B)$ is said to be
{\it nonnegative} if $F$ satisfies the following conditions:
(1) $F(X)$ is isomorphic to a complex with zero homology in all
negative degrees, for all $X \in \mod A$.
(2) $F(A)$ is isomorphic to a complex in $K^b(\proj B)$
with zero terms in all negative degrees.
According to \cite{HP17}, a nonnegative triangle functor $F: D^b(\mod A)\rightarrow D^b(\mod B)$
induces a stable functor $\overline{F}: \underline{\mod A} \rightarrow \underline{\mod B}$.
Indeed, for any $X \in \mod A$, assume $F(X)$ has a projective resolution of
the form $$(P^\bullet,d) : \cdots \longrightarrow  P^{-1} \longrightarrow
P^{0} \longrightarrow P^{1} \longrightarrow
\cdots \longrightarrow P^{m} \longrightarrow 0.$$
Then $\overline{F}(X)$ is defined to be the cokernel of $d^{-1}$.

\begin{lemma}\label{lemme-nonnegative}
Let $F : \mathcal{D}A \rightarrow \mathcal{D}B$ be a triangle functor
which has a left adjoint $L : \mathcal{D}B \rightarrow \mathcal{D}A$.
If $L$ and $F$ restrict to
$ K^b(\proj )$, then $F$ restrict to
$ D^b(\mod)$, and $F[-n]$ is nonnegative for some positive integer $n$.
\end{lemma}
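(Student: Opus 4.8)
The plan is to pass all boundedness information through the left adjoint $L$, exploiting that the compact objects of $\mathcal{D}A$ (resp. $\mathcal{D}B$) are precisely $K^b(\proj A)$ (resp. $K^b(\proj B)$). The key observation is that for every $X \in \mathcal{D}A$ and every $i \in \mathbb{Z}$, the adjunction $(L,F)$ together with the fact that $F$ commutes with shifts gives
\[
H^i(FX) \cong \Hom_{\mathcal{D}B}(B, (FX)[i]) = \Hom_{\mathcal{D}B}(B, F(X[i])) \cong \Hom_{\mathcal{D}A}(LB, X[i]).
\]
Since $L$ restricts to $K^b(\proj)$, the object $LB$ lies in $K^b(\proj A)$; I fix a representative $P^\bullet$, a bounded complex of finitely generated projectives concentrated in degrees $[a_1,a_2]$. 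As $P^\bullet$ is a bounded complex of projectives, $\Hom_{\mathcal{D}A}(LB, X[i]) = H^i\Hom_A^\bullet(P^\bullet, X)$ can be computed termwise.

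First I would show that $F$ restricts to $D^b(\mod)$. By the truncation triangles, $D^b(\mod A)$ is the triangulated subcategory of $\mathcal{D}A$ generated by the objects of $\mod A$ placed in degree $0$; since $F$ is a triangle functor and $D^b(\mod B)$ is a triangulated subcategory of $\mathcal{D}B$, it suffices to check $F(M) \in D^b(\mod B)$ for each $M \in \mod A$. For such an $M$ the displayed formula gives $H^i(FM) \cong \Hom_{\mathcal{D}A}(LB, M[i])$, which is a subquotient of $\Hom_A(P^{-i}, M)$; hence $H^i(FM) = 0$ unless $i \in [-a_2, -a_1]$, so the homology is bounded. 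Moreover each $\Hom_A(P^{-i}, M)$ is finitely generated over $R$, being a Hom-module between finitely generated modules over the Artin $R$-algebra $A$; thus $H^i(FM)$ is finitely generated over $R$, and therefore finitely generated over $B$, since an $R$-finite $B$-module is $B$-finite because $R$ acts centrally. Hence $FM \in D^b(\mod B)$, and the same computation applied to a general $X \in D^b(\mod A)$ confirms that $F$ preserves both boundedness and finite generation of homology.

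Next I would check that $F[-n]$ is nonnegative for $n$ large enough. For condition (1), the previous paragraph shows that for every $M \in \mod A$ the homology of $FM$ is concentrated in degrees $[-a_2, -a_1]$, so the homology of $(F[-n])(M) = FM[-n]$ is concentrated in $[n-a_2, n-a_1]$, which lies in the nonnegative degrees as soon as $n \ge a_2$. For condition (2), since $F$ restricts to $K^b(\proj)$ and $A$ is projective, $FA \in K^b(\proj B)$; I choose a representative that is a bounded complex of finitely generated projectives concentrated in degrees $[b_1, b_2]$. Then $(F[-n])(A) = FA[-n]$ is represented by this complex shifted into degrees $[b_1+n, b_2+n]$, which has zero terms in negative degrees once $n \ge -b_1$. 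Taking any positive integer $n \ge \max\{a_2, -b_1\}$ makes both conditions hold at once, so $F[-n]$ is nonnegative.

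The step I expect to be the crux is the adjunction identity $H^i(FX) \cong \Hom_{\mathcal{D}A}(LB, X[i])$: it converts the a priori uncontrolled homology of $FX$ into Hom-groups out of the \emph{compact} object $LB$, and it is exactly here that the hypothesis ``$L$ restricts to $K^b(\proj)$'' enters (the hypothesis on $F$ is needed only for condition (2)). Once this identity is available, the finiteness of the complex $P^\bullet$ does all the remaining work, and the only points needing care are the finite-generation bookkeeping over $R$ versus $B$ and keeping the cohomological shift conventions consistent.
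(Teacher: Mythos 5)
Your proposal is correct and takes essentially the same route as the paper: both arguments control $H^i(FX)$ via the adjunction isomorphism $\Hom_{\mathcal{D}B}(B,FX[i])\cong\Hom_{\mathcal{D}A}(LB,X[i])$ with $LB\in K^b(\proj A)$, and verify condition (2) of nonnegativity from $FA\in K^b(\proj B)$. The only difference is that you prove the restriction of $F$ to $D^b(\mod)$ directly (by the same adjunction plus the finiteness of $\Hom_A(P^{-i},M)$ over $R$), whereas the paper simply cites \cite[Lemma 2.7]{AKLY17} for that step.
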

\begin{proof}
It follows from \cite[Lemma 2.7]{AKLY17} that $F$ restrict to
$ D^b(\mod)$. Since $L(B)\in K^b(\proj A)$, there is a positive integer $r$
such that $L(B)$ is quasi-isomorphic to a complex $Q^ \bullet$ in $K^b(\proj A)$ with $Q^i=0$,
for any $i>r$. Similarly, there is a positive integer $s$
such that $F(A)$ is quasi-isomorphic to a complex $P^ \bullet$ in $K^b(\proj B)$ with $P^i=0$,
for any $i<-s$. Take $n=\max \{ r,s\}$, and then $L[n](B)$ is isomorphic to a
complex in $K^b(\proj A)$ with zero terms in all positive degrees,
and $F[-n](A)$ is isomorphic to a
complex in $K^b(\proj B)$ with zero terms in all negative degrees.
Moreover, for any $X \in \mod A$,
$H^i(F[-n]X)=\Hom_{\mathcal{D}B}(B,F[-n]X[i])\cong
\Hom_{\mathcal{D}A}(L[n]B,X[i])=0$, for any $i<0$.
Therefore, $F[-n]$ is nonnegative.

\end{proof}

Following \cite{Kato98}, we denote $D^b(A)_{fGd}$
the full subcategory of $D^b(A)$ consisting of objects
$X^\bullet$ such that $X^\bullet$ is quasi-isomorphic to
a complex in $K^b(\Gproj A)$.

\begin{proposition}\label{proposition-4-recoll}
Let $A$, $B$ and $C$ be Artin $R$-algebras, and
$$\xymatrix@!=6pc{ \mathcal{D}B \ar@<2.4ex>[r]|{i_*} \ar@<-4ex>[r]
\ar@<-0.8ex>[r]|{i_?}  & \mathcal{D}A \ar@<-4ex>[l]|{i^*}
\ar@<-0.8ex>[l]|{i^!} \ar@<+2.4ex>[l] \ar@<-0.8ex>[r]|{j^?}
\ar@<2.4ex>[r]|{j^*}  \ar@<-4ex>[r] & \mathcal{D}C \ar@<-4ex>[l]|{j_!}
\ar@<-0.8ex>[l]|{j_*} \ar@<2.4ex>[l]}$$ be a $4$-recollement.
Then there are two induced lower recollements
$$\xymatrix@!=6pc{D^b(B)_{fGd}
\ar@<1ex>[r]|{i_*} & D^b(A)_{fGd} \ar@<1ex>[l]|{i^!}
\ar@<1ex>[r]|{j^*} & D^b(C)_{fGd} \ar@<1ex>[l]|{j_*}
 } $$ and
 $$\xymatrix@!=6pc{\underline{\Gproj }B
\ar@<1ex>[r]|{\overline {i_*}}& \underline{\Gproj }A \ar@<1ex>[l]|{\overline{i^!}}
\ar@<1ex>[r]|{\overline{j^*}}  & \underline{\Gproj }C \ar@<1ex>[l]|{\overline{j_*}}
 } .$$
\end{proposition}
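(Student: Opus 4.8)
The plan is to obtain both recollements by restricting the given $4$-recollement on $\mathcal{D}$ successively to smaller subcategories: first to $K^b(\proj)$ and $D^b(\mod)$, then to $D^b(\cdot)_{fGd}$, and finally to pass to the Verdier quotient by $K^b(\proj)$, which by Buchweitz's theorem \cite{Buc87} (see also \cite{Kato98}) is exactly $\underline{\Gproj}$. Throughout I would work only with the four lower-row functors $i_*, i^!, j^*, j_*$, since these are the ones that will be shown to preserve Gorenstein-finiteness. The required lower-recollement axioms (the adjunctions $(i_*,i^!)$ and $(j^*,j_*)$, the full faithfulness of $i_*$ and $j_*$, the vanishing $i^!j_*=0$, and the canonical triangle $i_*i^!X \to X \to j_*j^*X \to$) all hold in the ambient $4$-recollement, and will simply be inherited by any triangulated subcategory that is closed under the four functors.

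First I would check that each of $i_*, i^!, j^*, j_*$ restricts to $K^b(\proj)$ and to $D^b(\mod)$. In a $4$-recollement every functor of the ladder possesses both a left and a right adjoint among the remaining functors; since a right adjoint that is itself a left adjoint preserves coproducts, each functor preserves compact objects and hence restricts to $K^b(\proj)$ (cf. \cite{AKLY17,QH16}). Moreover each of the four functors has a left adjoint that also restricts to $K^b(\proj)$, namely $i^*$ for $i_*$, $i_*$ for $i^!$, $j_!$ for $j^*$, and $j^*$ for $j_*$. Lemma~\ref{lemme-nonnegative} then shows that each of the four restricts to $D^b(\mod)$ and that a suitable shift of it is nonnegative; by \cite{HP17} each therefore induces a stable functor between the corresponding stable module categories.

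Next I would show that the four functors preserve $D^b(\cdot)_{fGd}$. Since $D^b(A)_{fGd}$ is the thick subcategory of $D^b(\mod A)$ generated by $\Gproj A$ and the functors are triangulated, it suffices to prove that each functor sends $\Gproj A$ into the target $D^b(\cdot)_{fGd}$. This is where the nonnegativity of the previous step is used: for a nonnegative functor $F$ and $M \in \Gproj A$, the stable-functor construction of \cite{HP17} produces a module realising $F(M)$ modulo projectives, and one checks that this forces $F(M)$ to have finite Gorenstein dimension, i.e. $F(M) \in D^b(\cdot)_{fGd}$. Together with the axioms inherited as in the first paragraph, this yields the first displayed lower recollement. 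I expect this step --- verifying that all four functors carry Gorenstein-projective modules into objects of finite Gorenstein dimension, and that the three vertices of the canonical triangle all stay inside $D^b(\cdot)_{fGd}$ --- to be the main obstacle, since it requires the full force of the stable-functor machinery rather than formal nonsense.

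Finally I would pass to the quotient. Because the four functors also preserve $K^b(\proj)$, they restrict to a lower recollement on $K^b(\proj)$ sitting compatibly inside the one on $D^b(\cdot)_{fGd}$; passing to Verdier quotients then produces a lower recollement on $D^b(\cdot)_{fGd}/K^b(\proj)$. By Buchweitz's theorem the canonical functor $\underline{\Gproj}A \to D^b(A)_{fGd}/K^b(\proj A)$ is an equivalence for each of $A,B,C$, and under these equivalences the descended functors are identified with the stable functors $\overline{i_*}, \overline{i^!}, \overline{j^*}, \overline{j_*}$ produced in the second paragraph (the shift making a functor nonnegative is invisible on $\underline{\Gproj}$, as this category is stable under syzygies). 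This gives the second displayed lower recollement. A secondary point to verify carefully is that quotienting a lower recollement by a compatible lower recollement again yields a lower recollement, but this is formal once the compatibility --- preservation of both $D^b(\cdot)_{fGd}$ and $K^b(\proj)$ --- has been established.
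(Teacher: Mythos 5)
Your proposal is correct and follows essentially the same route as the paper: restrict the four lower-row functors to $K^b(\proj)$ and $D^b(\mod)$ via the adjoint/compactness argument of \cite{AKLY17}, shift to make them nonnegative as in Lemma~\ref{lemme-nonnegative}, invoke the stable-functor theorem of \cite{HP17} to see that Gorenstein-projectives are sent into $D^b(\cdot)_{fGd}$, and then pass to the Verdier quotient by $K^b(\proj)$ using the Buchweitz--Kato equivalence $\underline{\Gproj}\cong D^b(\cdot)_{fGd}/K^b(\proj)$. The only difference is cosmetic: where you argue the descent to the quotient ``formally,'' the paper cites \cite[Lemma 2.3]{Lu17}, which is precisely that formal statement.
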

\begin{proof}
According to \cite[Lemma 2.9]{AKLY17},
the functors $i_*, i^!, i_?, j^*, j_*, j^?$ restrict to both
$ K^b(\proj )$ and $\mathcal{D}^b(\mod)$, $i^*$ and $j_!$ restrict to
$ K^b(\proj )$.
Due to Lemma~\ref{lemme-nonnegative}, we may assume both $i_*$ and $j_*$ are nonnegative.
Then it follows from \cite[Theorem 5.3]{HP17} that $i_*$ (resp. $j_*$)
sends modules in $\Gproj B$ (resp. $\Gproj C$) to complexes in $K^b(\Gproj A)$, up to quasi-isomorphism.
That is, both $i_*$ and $j_*$ restrict to $D^b(-)_{fGd}$.
By Lemma~\ref{lemme-nonnegative}, there
exist two integers $n, m > 0$ such that $i^![-n]$ and $j^*[-m]$ are nonnegative.
Therefore, both $i^![-n]$ and $j^*[-m]$ restrict to $D^b(-)_{fGd}$, and
so do $i^!$ and $j^*$. Hence, we obtain the desired lower recollement of $D^b(-)_{fGd}$-level.

From \cite[Proposition 3.4]{Kato98}
or \cite[Theorem 4]{PZ15}, there is a triangle equivalence
$\underline{\Gproj }A \cong D^b(A)_{fGd}/K^b(\proj A)$.
Now the lower recollement of $\underline{\Gproj }$-level exists,
according to \cite[Lemma 2.3]{Lu17}.
\end{proof}

\begin{remark}
{\rm The lower recollement of $\underline{\Gproj }$-level
is obtained in \cite[Theorem 3.3]{Zhang13}
for triangular matrix algebra.}

\end{remark}

In the following, we denote by ${X^\bullet}^ \perp$
the full subcategory of $\mathcal{D}A$ consisting of all objects
$Y^\bullet \in \mathcal{D}A$ such that $\Hom _{\mathcal{D}A}(X^\bullet,Y^\bullet[n])=0$,
for all $n\in \mathbb{Z}$.
The following proposition
is essentially due to K\"{o}nig \cite{Koe91}.
\begin{proposition}\label{proposition-recoll}
Let $A$, $B$ and $C$ be Artin $R$-algebras.
Then $\mathcal{D}A$ admits a $2$-recollement relative to
$\mathcal{D}B$ and $\mathcal{D}C$ if and only if there are objects
$T^\bullet$ and $U^\bullet$ in $K^b(\proj A)$ such that

{\rm (1)} $\End _{\mathcal{D}A}(T^\bullet)\cong C$ and $\End _{\mathcal{D}A}(U^\bullet)\cong B$
as algebras;

{\rm (2)} $\Hom _{K^b(\proj A)}
(T^\bullet, T^\bullet [i])\cong \Hom _{K^b(\proj A)}
(U^\bullet, U^\bullet [i])\cong 0$, for any $i\neq 0$;

{\rm (3)} $U^\bullet \in {T^\bullet}^ \perp$;

{\rm (4)} ${T^\bullet}^ \perp \bigcap {U^\bullet}^ \perp =\{0\}$.
\end{proposition}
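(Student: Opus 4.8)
The plan is to prove the two implications by identifying $T^\bullet$ and $U^\bullet$ with the images $j_!(C)$ and $i_*(B)$ of the coefficient algebras under the gluing functors, and then reading off the four conditions from the recollement identities.

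For the forward direction I would start from the $2$-recollement, writing $i^*,i_*,i^!,j_!,j^*,j_*$ for the six functors of its lower recollement and using the extra top row only to record that $i^!$ acquires a right adjoint. Setting $T^\bullet=j_!(C)$ and $U^\bullet=i_*(B)$, I would first check compactness: $j^*$ has a right adjoint, so it preserves coproducts and its left adjoint $j_!$ preserves compactness, giving $T^\bullet\in K^b(\proj A)$; the new right adjoint of $i^!$ makes $i^!$ preserve coproducts, so $i_*(=i_!)$ preserves compactness and $U^\bullet\in K^b(\proj A)$. Conditions (1)--(4) would then follow formally from the identities $j^*j_!\cong\id$, $i^!i_*\cong\id$ and $j^*i_*=0$ together with adjunction and the fact that $B$ and $C$ are compact generators of their derived categories: (1) and (2) come from $\Hom(T^\bullet,T^\bullet[i])\cong\Hom_{\mathcal{D}C}(C,C[i])$ and the analogue for $U^\bullet$ (the degree-zero isomorphism being a ring isomorphism since adjunction respects composition); (3) from $j^*i_*=0$; and (4) by first using $C$ to force $j^*Y=0$, hence $Y\cong i_*Y'$, and then using $B$ to force $Y'=0$.

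The converse is the substantial half. Viewing $T^\bullet$ as a complex of $C$--$A$--bimodules through $C\cong\End(T^\bullet)$, I would set $j^*=\RHom_A(T^\bullet,-)$, $j_!=-\otimes^{L}_C T^\bullet$ and let $j_*$ be the right adjoint supplied by Brown representability (available because $T^\bullet$ is compact, so $j^*$ preserves coproducts). Conditions (1) and (2) say exactly that $\RHom_A(T^\bullet,T^\bullet)\cong C$, so $j^*j_!\cong\id$; this makes $j^*$ a Verdier localization with fully faithful $j_!$ and $j_*$, whose kernel ${T^\bullet}^\perp$ produces an ordinary recollement $({T^\bullet}^\perp,\mathcal{D}A,\mathcal{D}C)$ -- this is essentially K\"onig's construction \cite{Koe91}. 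Next I would identify ${T^\bullet}^\perp$ with $\mathcal{D}B$: since $T^\bullet$ is compact this perpendicular category is localizing, by (3) it contains $U^\bullet$, which is compact in $\mathcal{D}A$ and hence in ${T^\bullet}^\perp$, and (4) says no nonzero object of it is right-orthogonal to $U^\bullet$, so $U^\bullet$ is a compact generator; being self-orthogonal with $\End(U^\bullet)\cong B$ by (1)--(2), Keller's derived Morita theorem yields ${T^\bullet}^\perp\cong\mathcal{D}B$ with $i_*(B)=U^\bullet$.

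At this point I would have an ordinary recollement with $j_!(C)=T^\bullet$ and $i_*(B)=U^\bullet$ both compact, and the remaining task is to add the fourth row. Compactness of $U^\bullet$ makes $i_*$ preserve compactness, so $i^!$ preserves coproducts and gains a right adjoint; for the other side I would feed the compact object $i_*i^*A$ (note $i^*$ always preserves compactness) into the recollement triangle $j_!j^*A\to A\to i_*i^*A\to$ to conclude that $j_!j^*A$ is compact, and since it lies in the localizing image of the fully faithful $j_!$, applying $j^*$ shows $j^*A$ is compact, whence $j^*$ preserves compactness and $j_*$ gains a right adjoint. These adjoints then assemble so that the top three rows again form a recollement by the ladder formalism of \cite{AKLY17}. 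I expect this upgrade to be the main obstacle, precisely because producing the right adjoint of $j_*$ is not formal: it rests on the triangle argument promoting compactness of $U^\bullet$ to compactness of $j^*A$. The other delicate point is the application of Keller's theorem, where self-orthogonality of $U^\bullet$ is exactly what guarantees that its derived endomorphism algebra is formal and hence genuinely $B$.
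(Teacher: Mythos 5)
Your proposal is correct, and its skeleton coincides with the paper's: both directions hinge on identifying $T^\bullet$ with $j_!C$ and $U^\bullet$ with $i_*B$, both use the K\"onig-style perpendicular-category construction for the ``if'' direction, and both extend the resulting recollement to a $2$-recollement by exploiting compactness of $U^\bullet$. The difference is one of execution. The paper's proof is a three-line citation chain: it invokes K\"onig's Theorem~1 to produce a recollement at the $\mathcal{D}^-(\Mod)$ level, lifts it to $\mathcal{D}(\Mod)$ via \cite[Proposition 4.1]{AKLY17}, and then extends downward via \cite[Lemma 3.1]{AKLY17}; the converse is dismissed as ``clear.'' You instead work directly in the unbounded derived category, building $j_!=-\otimes^L_C T^\bullet$ and $j^*=\RHom_A(T^\bullet,-)$, identifying $\Ker j^*={T^\bullet}^\perp$ with $\mathcal{D}B$ through Keller's Morita theorem (where conditions (1)--(2) make the dg endomorphism algebra of $U^\bullet$ quasi-isomorphic to $B$, and (3)--(4) make $U^\bullet$ a compact generator of ${T^\bullet}^\perp$), and you prove the extension step by hand: $U^\bullet$ compact gives $i_*$ compact-preserving, hence the right adjoint of $i^!$, and the triangle $j_!j^*A\to A\to i_*i^*A\to$ together with full faithfulness of $j_!$ forces $j^*A$ compact, hence the right adjoint of $j_*$ --- which is exactly the content of the cited \cite[Lemma 3.1]{AKLY17}. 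Your route buys self-containedness and avoids the $\mathcal{D}^-$-to-$\mathcal{D}$ lifting step at the cost of the dg bookkeeping (lifting the $C$-action on $T^\bullet$ to a genuine bimodule via the truncated endomorphism dg algebra), and you also supply the ``clear'' direction in full, including the compactness of $U^\bullet$ coming from the fourth layer. No gaps.
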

\begin{proof}
Assume there are objects
$T^\bullet$ and $U^\bullet$ with the desired properties.
From \cite[Theorem 1]{Koe91}, there is a recollement
$(\mathcal{D}^-(\Mod B), \mathcal{D}^-(\Mod A), \mathcal{D}^-(\Mod C),
\linebreak
i^*,i_*, i^!, j_!,j^*, j_* )$ with $j_!C\cong T^\bullet$
and $i_*B\cong U^\bullet$.
By \cite[Proposition 4.1]{AKLY17}, this $\mathcal{D}^-(\Mod-)$
recollement can be lifted to a recollement
of $\mathcal{D}(\Mod)$-level. Now $U^\bullet\in K^b(\proj A)$
implies that this recollement can be extended to a $2$-recollement
(ref. \cite[Lemma 3.1]{AKLY17}).
The converse is clear.
\end{proof}

\begin{theorem}\label{theorem-1}
Let $A$, $B$ and $C$ be Artin $R$-algebras
such that
$B$ and $C$ are CM-finite. Assume there is a $4$-recollement
$$\xymatrix@!=6pc{ \mathcal{D}B \ar@<2.4ex>[r]|{i_*} \ar@<-4ex>[r]
\ar@<-0.8ex>[r]|{i_?}  & \mathcal{D}A \ar@<-4ex>[l]|{i^*}
\ar@<-0.8ex>[l]|{i^!} \ar@<+2.4ex>[l] \ar@<-0.8ex>[r]|{j^?}
\ar@<2.4ex>[r]|{j^*}  \ar@<-4ex>[r] & \mathcal{D}C \ar@<-4ex>[l]|{j_!}
\ar@<-0.8ex>[l]|{j_*} \ar@<2.4ex>[l]}.$$
Then there is a $2$-recollement
$$\xymatrix@!=6pc{ \mathcal{D}\Gamma _C \ar@<-2.4ex>[r]
\ar@<0.8ex>[r]  & \mathcal{D}H \ar@<-2.4ex>[l]
\ar@<+0.8ex>[l] \ar@<-2.4ex>[r] \ar@<+0.8ex>[r]
& \mathcal{D}\Gamma _B \ar@<-2.4ex>[l]
\ar@<+0.8ex>[l]},$$
where $\Gamma _B$ and $\Gamma _C$ are the corresponding Cohen-Macaulay Auslander algebras
of $B$ and $C$, $H=\End _A(A\oplus \overline {i_*}X\oplus \overline {j_*}Y)$,
and $X$ (resp. $Y$) is the direct sum of all the indecomposable
non-projective objects in $\Gproj B$ (resp. $\Gproj C$).

\end{theorem}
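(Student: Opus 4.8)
The plan is to verify the hypotheses of Proposition~\ref{proposition-recoll} for the algebra $H$. Thus it suffices to produce two objects $T^\bullet, U^\bullet \in K^b(\proj H)$ with $\End_{\mathcal{D}H}(T^\bullet)\cong \Gamma_B$ and $\End_{\mathcal{D}H}(U^\bullet)\cong \Gamma_C$, with no higher self-extensions, with $U^\bullet \in {T^\bullet}^\perp$, and with ${T^\bullet}^\perp \cap {U^\bullet}^\perp = \{0\}$; the resulting $2$-recollement then has $\mathcal{D}\Gamma_C$ on the left (matching $\End(U^\bullet)=\Gamma_C$) and $\mathcal{D}\Gamma_B$ on the right. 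Throughout I use three identifications coming from additive generators. First, $\Hom_A(M,-)$ restricts to an equivalence $\add M \xrightarrow{\sim} \proj H$, hence to a triangle equivalence $K^b(\add M)\xrightarrow{\sim} K^b(\proj H)\subseteq \mathcal{D}H$. Second, $\Hom_B(N_B,-)$ (resp. $\Hom_C(N_C,-)$) restricts to an equivalence $\Gproj B \xrightarrow{\sim}\proj \Gamma_B$ (resp. $\Gproj C \xrightarrow{\sim}\proj \Gamma_C$), where $N_B = B\oplus X$ and $N_C = C\oplus Y$ are the additive generators, giving $K^b(\Gproj B)\cong K^b(\proj \Gamma_B)$ and $K^b(\Gproj C)\cong K^b(\proj \Gamma_C)$. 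Under these, producing $T^\bullet, U^\bullet$ amounts to constructing suitable objects of $K^b(\add M)$.

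First I would construct fully faithful triangle functors $K^b(\Gproj B)\to K^b(\add M)$ and $K^b(\Gproj C)\to K^b(\add M)$ realising $i_*$ and $j_*$ at the homotopy level, and set $T^\bullet$, $U^\bullet$ to be the images of the stalk complexes $N_B$, $N_C$. The whole point of working in the homotopy categories is that there a stalk complex is automatically self-orthogonal: $\Hom_{K^b(\Gproj B)}(N_B, N_B[n]) = 0$ for $n\neq 0$, while $\End_{K^b(\Gproj B)}(N_B) = \End_B(N_B) = \Gamma_B$. A fully faithful triangle functor transports these two facts verbatim, yielding conditions (1) and (2) of Proposition~\ref{proposition-recoll} for $T^\bullet$, and likewise for $U^\bullet$ with $\Gamma_C$. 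The functors themselves originate from Proposition~\ref{proposition-4-recoll}: by Lemma~\ref{lemme-nonnegative} the functors $i_*$, $j_*$ may be taken nonnegative, so by \cite[Theorem 5.3]{HP17} they send $\Gproj B$, $\Gproj C$ into $K^b(\Gproj A)$; since the Gorenstein-projective modules so produced are, up to projective summands, direct summands of $\overline{i_*}X$ and $\overline{j_*}Y$, these images already lie in $K^b(\add M)$. Composing with $\Hom_A(M,-)$ then lands the objects in $K^b(\proj H)$.

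Next I would check (3) and (4). For $U^\bullet \in {T^\bullet}^\perp$ I would transport the orthogonality built into the recollement: the vanishing relation $i^! j_* = 0$ (equivalently $j^* i_* = 0$) of the $4$-recollement survives under restriction to the $D^b(-)_{fGd}$- and $\underline{\Gproj}$-levels by Proposition~\ref{proposition-4-recoll}, and then under the equivalence $\Hom_A(M,-)$, forcing $\Hom_{\mathcal{D}H}(T^\bullet, U^\bullet[n]) = 0$ for all $n$. For the generation condition (4), I would use that $N_B$ and $N_C$ generate $K^b(\Gproj B)$ and $K^b(\Gproj C)$ as thick subcategories, that the recollement triangle $j_!j^! \to \id \to i_*i^*$ expresses every object — in particular the class of $A \in \add M$ — through the images of the two sides, so that the images of $N_B$, $N_C$ thickly generate $K^b(\add M)$, and finally that $\Hom_A(M,-)$ is a triangle equivalence onto $K^b(\proj H)$. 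Hence $\thick(T^\bullet, U^\bullet) = K^b(\proj H) = \mathcal{D}^c H$, which forces ${T^\bullet}^\perp \cap {U^\bullet}^\perp = \{0\}$. With (1)--(4) established, Proposition~\ref{proposition-recoll} delivers the asserted $2$-recollement of $\mathcal{D}H$ relative to $\mathcal{D}\Gamma_C$ and $\mathcal{D}\Gamma_B$.

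The main obstacle is precisely the first step: promoting the derived functors $i_*$, $j_*$ to fully faithful triangle functors \emph{at the homotopy level} $K^b(\Gproj B)\to K^b(\add M)$. The difficulty is that $i_*$ is a priori only a derived functor, and the comparison $K^b(\Gproj B)\to D^b(B)_{fGd}$ need not be fully faithful, since it inverts quasi-isomorphisms; thus the self-orthogonality of a stalk, visible in $K^b(\Gproj B)$, is invisible at the derived level, where Gorenstein-projective modules typically carry self-extensions. Reconciling the two requires a genuinely additive, $\Hom$-preserving module-level realisation of the stable functor $\overline{i_*}$ on $\Gproj B$ into $\add M$, for which the nonnegativity of $i_*$ (Lemma~\ref{lemme-nonnegative}), the image statement \cite[Theorem 5.3]{HP17}, and the equivalence $\underline{\Gproj}A \cong D^b(A)_{fGd}/K^b(\proj A)$ of \cite{Kato98} are the essential ingredients. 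This is also exactly where the CM-finiteness of $B$ and $C$ enters, guaranteeing that the additive generators $N_B$, $N_C$ exist and that their images assemble into the single module $M$ defining $H$.
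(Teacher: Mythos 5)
Your overall architecture coincides with the paper's: set $N=A\oplus\overline{i_*}X\oplus\overline{j_*}Y$ and $H=\End_A N$, use the equivalence $\Hom_A(N,-)\colon K^b(\add N)\to K^b(\proj H)$, take $T^\bullet$ and $U^\bullet$ to be the images of complexes representing $i_*(B\oplus X)$ and $j_*(C\oplus Y)$, and verify the four conditions of Proposition~\ref{proposition-recoll}. Your treatment of condition (4) is essentially the paper's Step 4, except that the relevant triangle is $i_*i^!A\to A\to j_*j^*A\to$ rather than $j_!j^!A\to A\to i_*i^*A\to$: it is $i_*$ and $j_*$ (not $j_!$) whose images build $T^\bullet$ and $U^\bullet$, and $j_!j^!A$ lies in $\thick(j_!C)$, which is not obviously contained in $\thick(T^\bullet\oplus U^\bullet)$.

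The genuine gap is exactly the point you flag as ``the main obstacle,'' and it is not a presentational issue --- it is where the real content of the theorem sits. Conditions (2) and the $n\neq 0$ part of (3) require vanishing of $\Hom$ in the \emph{homotopy} category $K^b(\add N)$ between complexes such as $P_X^\bullet$ (the module $\overline{i_*}X$ in degree $0$, projectives in positive degrees). This cannot be transported from the derived level: full faithfulness of $i_*$ gives $\Hom_{\mathcal{D}A}(i_*X,i_*X[n])\cong\Ext^n_B(X,X)$, which is generally nonzero for a Gorenstein-projective module $X$, so the canonical functor $K^b(\add N)\to\mathcal{D}A$ is not fully faithful on these objects, and a ``fully faithful triangle functor $K^b(\Gproj B)\to K^b(\add N)$ realising $i_*$'' cannot be obtained by formal transport of structure. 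Indeed, the existence of such a functor is essentially \emph{equivalent} to conditions (1) and (2) (by d\'{e}vissage from the additive generator), so positing it is circular. The ingredients you list (nonnegativity, \cite[Theorem 5.3]{HP17}, the equivalence $\underline{\Gproj}A\cong D^b(A)_{fGd}/K^b(\proj A)$) control images of objects and morphisms modulo projectives, but they do not compute $\Hom_{K^b}$ in nonzero degrees. What is actually needed is the direct homotopy-category computation of \cite[Lemma 3.7]{Pan12}, which uses the specific shape of $P_X^\bullet$ and $Q_Y^\bullet$ (Gorenstein-projective head, projective tail) to show that all chain maps in nonzero degrees are null-homotopic, together with \cite[Lemma 2.2]{HX10} to identify the degree-zero endomorphism rings with the derived ones. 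The paper's Steps 1--3 consist precisely of these two inputs; without supplying them (or an equivalent argument), your verification of conditions (1)--(3) is incomplete.
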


\begin{proof}
Without loss of generality, we may assume both $i_*$ and $j_*$
are nonnegative, and then $i_*B$(resp. $j_*C$) is quasi-isomorphic
to a projective complex $P^\bullet$(resp. $Q^\bullet$)
of the following form:
 $$P^\bullet : 0 \longrightarrow  P^{0} \longrightarrow
P^{1} \longrightarrow
\cdots \longrightarrow P^{n} \longrightarrow 0,$$
 $$Q^\bullet : 0 \longrightarrow  Q^{0} \longrightarrow
Q^{1} \longrightarrow
\cdots \longrightarrow Q^{m} \longrightarrow 0.$$
Moreover, $i_*X$(resp. $j_*Y$) is quasi-isomorphic
to a complex $P_X^\bullet$(resp. $Q_Y^\bullet$)
of the following form:
 $$P_X^\bullet : 0 \longrightarrow  \overline{X} \longrightarrow
P_X^{1} \longrightarrow
\cdots \longrightarrow P_X^{n} \longrightarrow 0,$$
 $$Q_Y^\bullet : 0 \longrightarrow  \overline{Y} \longrightarrow
Q_Y^{1} \longrightarrow
\cdots \longrightarrow Q_Y^{m} \longrightarrow 0,$$
where $\overline{X}=\overline {i_*}X\in \Gproj A$, $\overline{Y}=\overline {j_*}Y\in\Gproj A$,
and $P_X^{i}$, $Q_Y^{i}\in \proj A$, for any $i\geq 1$.
Set $N=A\oplus \overline{X}\oplus \overline{Y}$ and $H=\End _AN$. Then
the functor $\Hom_A(N,-)$ gives an equivalence $\add N\cong \proj H$,
and it induces an equivalence $K^b(\add N)\cong K^b(\proj H)$.
Clearly, $P^\bullet, P_X^\bullet, Q^\bullet$ and $Q_Y^\bullet\in
K^b(\add N)$. Therefore, $\Hom _A(N,P^\bullet \oplus P_X^\bullet)\in K^b(\proj H)$
and $\Hom _A(N,P^\bullet \oplus P_X^\bullet)\in K^b(\proj H)$.
Set $T^\bullet:=\Hom _A(N,P^\bullet \oplus P_X^\bullet)$
and $U^\bullet:=\Hom _A(N,Q^\bullet \oplus Q_Y^\bullet)$.
Now we claim the $H$-module complexes $T^\bullet$
and $U^\bullet$ satisfy
all conditions in Proposition~\ref{proposition-recoll}.

{\it Step 1} \ Due to the equivalence $K^b(\add N)\cong K^b(\proj H)$,
we have $$\End _{\mathcal{D}H}(T^\bullet)\cong \End _{K^b(\proj H)}(T^\bullet)\cong
\End _{K^b(\add N)}(P^\bullet \oplus P_X^\bullet)=
\End _{K^b(A)}(P^\bullet \oplus P_X^\bullet),$$
and by \cite[Lemma 2.2]{HX10}, we have $\End _{K^b(A)}(P^\bullet \oplus P_X^\bullet)\cong
\End _{\mathcal{D}A}(P^\bullet \oplus P_X^\bullet).$
Therefore, $$\End _{\mathcal{D}H}(T^\bullet)\cong
\End _{\mathcal{D}A}(P^\bullet \oplus P_X^\bullet)\cong
\End _{\mathcal{D}B}(B \oplus X)\cong \Gamma _B.$$
Similarly, we have $$\End _{\mathcal{D}H}(U^\bullet)\cong
\End _{\mathcal{D}A}(Q^\bullet \oplus Q_Y^\bullet)\cong
\End _{\mathcal{D}C}(C \oplus Y)\cong \Gamma _C.$$

{\it Step 2} \ Only need to show $\Hom _{K^b(A)}
(P^\bullet \oplus P_X^\bullet, P^\bullet \oplus P_X^\bullet [i])
\cong \Hom _{K^b(A)}
(Q^\bullet \oplus Q_Y^\bullet, Q^\bullet \oplus Q_Y^\bullet [i])\cong 0$,
for any $i\neq 0$, and this is exactly the same as
\cite[Lemma 3.7]{Pan12}, so we skip the
proof.

{\it Step 3} \ Only need to show $\Hom _{K^b(A)}
(P^\bullet \oplus P_X^\bullet, Q^\bullet \oplus Q_Y^\bullet [i])
\cong 0$,
for any $i\in \mathbb{Z}$. Indeed, if $i\neq 0$ then
we get $\Hom _{K^b(A)}
(P^\bullet \oplus P_X^\bullet, Q^\bullet \oplus Q_Y^\bullet [i])
\cong 0$ using the same argument as \cite[Lemma 3.7]{Pan12}.
Moreover, by \cite[Lemma 2.2]{HX10}, we have
$\Hom _{K^b(A)}
(P^\bullet \oplus P_X^\bullet, C \oplus Q_Y^\bullet )
\cong \Hom _{\mathcal{D}A}
(P^\bullet \oplus P_X^\bullet, Q^\bullet \oplus Q_Y^\bullet )
\cong \Hom _{\mathcal{D}A}
(i_*B \oplus i_*X, j_*C \oplus j_*Y )\cong0.$

{\it Step 4} \ From the triangle
$i_*i^!A\rightarrow A \rightarrow j_*j^*A\rightarrow $,
we have that $\add (P^\bullet \oplus Q^\bullet)$ generates
$K^b(\proj A)$ as a triangulated category.
Then, by the triangles $\tau _{\geq 1}(P_X^\bullet)
\rightarrow P_X^\bullet \rightarrow \overline{X}\rightarrow$
and $\tau _{\geq 1}(Q_Y^\bullet)
\rightarrow Q_Y^\bullet \rightarrow \overline{Y}\rightarrow$,
we get that both $\overline{X}$ and $\overline{Y}$
are in the triangulated subcategory generated by $\add (P^\bullet \oplus
P_X^\bullet\oplus Q^\bullet \oplus Q_Y^\bullet )$, that is,
$\add (P^\bullet \oplus
P_X^\bullet\oplus Q^\bullet \oplus Q_Y^\bullet )$
generates $K^b(\add N)$ as
a triangulated category. Applying the functor $\Hom_A(N,-)$,
we get that $\add (T^\bullet\oplus U^\bullet)$
generates $K^b(\proj H)$ as
a triangulated category, and thus,
${T^\bullet}^ \perp \bigcap {U^\bullet}^ \perp =\{0\}$.

\end{proof}

Applying Theorem 1 to derived equivalence, we get the following corollary,
which is a generalization of the main theorem in \cite{Pan12, PZ15}.
\begin{corollary}
Let $A$ and $B$ be Artin $R$-algebras of finite Cohen-Macaulay type.
If $A$ and $B$ are derived equivalent, then their Cohen-Macaulay Auslander
algebras $\Gamma _A$ and $\Gamma _B$ are derived equivalent.
\end{corollary}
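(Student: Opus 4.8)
The plan is to realise a derived equivalence between $A$ and $B$ as a special case of a $4$-recollement and then simply quote Theorem~\ref{theorem-1}. The first step is to observe that a derived equivalence $\mathcal{D}A \simeq \mathcal{D}B$ can be packaged as a $4$-recollement of $\mathcal{D}A$ relative to $\mathcal{D}B$ and the trivial category $\mathcal{D}C$ with $C=0$. Indeed, if $F:\mathcal{D}B \to \mathcal{D}A$ is a triangle equivalence (standard or not), then taking $i_*=F$, $i^*=i^!=F^{-1}$, and letting all the functors to and from $\mathcal{D}C=0$ be the zero functors, one checks directly that every consecutive triple of the required six (indeed eight) layers forms a recollement: condition (R2) holds because $F$ is fully faithful, (R3) is automatic since one side is zero, and the recollement triangles in (R4) degenerate because $j_!j^!X=0=j_*j^*X$ forces $X \cong i_*i^*X \cong i_!i^!X$, which is exactly the statement that $F$ is an equivalence. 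Since the outer category is zero, this data extends trivially to a $4$-recollement (in fact an $\infty$-recollement).

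The second step is to feed this $4$-recollement into Theorem~\ref{theorem-1}. The hypotheses require $B$ and $C$ to be CM-finite; here $B$ is CM-finite by assumption and $C=0$ is vacuously CM-finite with $\Gamma_C = 0$. Theorem~\ref{theorem-1} then produces a $2$-recollement of $\mathcal{D}H$ relative to $\mathcal{D}\Gamma_C = \mathcal{D}0$ and $\mathcal{D}\Gamma_B$. Because one outer term of this $2$-recollement is the zero category, the recollement collapses to a triangle equivalence $\mathcal{D}H \simeq \mathcal{D}\Gamma_B$: the embedding $i_*$ of the zero category has zero image, and the recollement triangles again force the middle category to coincide with the remaining outer one. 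Thus $\Gamma_B$ is derived equivalent to $H$.

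The third step is to identify $H$ with $\Gamma_A$. By construction in Theorem~\ref{theorem-1}, $H = \End_A(A \oplus \overline{i_*}X \oplus \overline{j_*}Y)$, where $X$ is the sum of the indecomposable non-projective Gorenstein-projective $B$-modules and $Y$ is the corresponding sum for $C$. With $C=0$ we have $Y=0$, so $H = \End_A(A \oplus \overline{i_*}X)$. The key point is that the stable functor $\overline{i_*}=\overline{F}:\underline{\Gproj}B \to \underline{\Gproj}A$ is a triangle equivalence (it is the equivalence on singularity-category/$\underline{\Gproj}$ level induced by the derived equivalence, as in Proposition~\ref{proposition-4-recoll}), so $A \oplus \overline{i_*}X$ is an additive generator of $\Gproj A$ up to projective summands, whence $\add(A \oplus \overline{i_*}X) = \Gproj A$ and $H = \End_A(A\oplus \overline{i_*}X) \cong \Gamma_A$. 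Chaining the equivalences gives $\mathcal{D}\Gamma_A \simeq \mathcal{D}H \simeq \mathcal{D}\Gamma_B$, as required.

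I expect the main obstacle to be the bookkeeping in the third step: one must verify that $\overline{i_*}X$ really exhausts the non-projective Gorenstein-projective $A$-modules, i.e.\ that the stable equivalence $\overline{F}$ sends a complete set of indecomposable non-projective Gorenstein-projectives over $B$ to a complete set over $A$, so that $A \oplus \overline{i_*}X$ is genuinely an additive generator of $\Gproj A$ and $H$ is Morita equivalent (hence isomorphic up to Morita equivalence, which suffices for derived equivalence) to $\Gamma_A$. This relies on $\overline{F}$ being an equivalence of stable categories preserving indecomposability and carrying projectives appropriately, which is where the hypotheses of Proposition~\ref{proposition-4-recoll} and the nonnegativity arguments do the real work; the two collapsing-recollement arguments, by contrast, are formal.
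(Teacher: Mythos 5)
Your proposal is correct and follows essentially the same route as the paper: view the derived equivalence as a $4$-recollement with zero outer term, apply Theorem~\ref{theorem-1} to get $\mathcal{D}H\simeq\mathcal{D}\Gamma_B$, and then use the stable equivalence $\underline{\Gproj}B\cong\underline{\Gproj}A$ from Proposition~\ref{proposition-4-recoll} to conclude $\add(A\oplus\overline{i_*}X)=\Gproj A$ and hence identify $H$ with $\Gamma_A$ (up to Morita equivalence, which is all that is needed). The "main obstacle" you flag in the third step is exactly the point the paper settles by citing Proposition~\ref{proposition-4-recoll}, so there is no gap.
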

\begin{proof}
Note that any derived equivalent can be viewed as a $4$-recollement whose
right term is zero. So, by Theorem~\ref{theorem-1},
we get an equivalence
$\mathcal{D}H\cong \mathcal{D}\Gamma _B$, where
$H=\End _A(A\oplus \overline {i_*}X)$,
and $X$ is the direct sum of all the indecomposable
non-projective objects in $\Gproj B$.
By Proposition~\ref{proposition-4-recoll},
the functor $\overline {i_*}$ induces an equivalence $\underline{\Gproj }B\cong \underline{\Gproj }A$,
and thus, $\add(A\oplus \overline {i_*}X) = \Gproj A$
and hence the proof is complete.
\end{proof}

\section{\large Recollements and homological conjectures}
\indent\indent The following conjecture,
proposed by Auslander and Reiten \cite{AR75}, plays an important role in the
understanding of the famous Nakayama
conjecture.

{\bf Auslander-Reiten conjecture.}
A finitely generated
module $M$ over an Artin algebra $A$ is projective if
$\Ext ^i _A(M, M\oplus A) = 0$, for any $i \geq 1$.

The Auslander-Reiten Conjecture is proved for several classes of algebras,
such as algebras of finite representation type \cite{AR75}
and symmetric algebras with radical cube zero \cite{Hos84}.
However, it remains open in general.
As a special case of Auslander-Reiten conjecture, Luo and Huang \cite{LH08}
proposed the following:

{\bf Gorenstein projective conjecture.} A finitely generated Gorenstein projective
module $M$ over an Artin algebra $A$ is projective if
 $\Ext ^i _A(M,M)
\linebreak
= 0$, for any $i \geq 1$.

The Auslander-Reiten conjecture
and Gorenstein projective conjecture coincide when $A$ is a
Gorenstein algebra, but it seems not true in general.
Moreover, the
Gorenstein projective conjecture is proved for
CM-finite algebras \cite{Zhang12}. For more development of this conjecture we refer to
\cite{LJ16}.

In this section, we will investigate
the invariance of Gorenstein projective conjecture and
Auslander-Reiten conjecture along recollement
of derived categories.

For an Artin algebra $A$, set $^\bot A := \{M \in  \mod A | \Ext ^i(M, A) = 0, \forall i > 0 \}$,
and denote $\underline{^\bot A}$ the stable category of $^\bot A$
modulo finitely generated projective $A$-modules. According to \cite[Theorem 2.12]{BM94}, $\underline{^\bot A}$
is a left triangulated
category with the standard left triangulated structure.

\begin{lemma}\label{lemma-3-recoll}
Let $A$, $B$ and $C$ be Artin $R$-algebras, and
$$\xymatrix@!=6pc{ \mathcal{D}B \ar@<-1.5ex>[r]|{i_?}\ar@<+1.5ex>[r]|{i_*} &\mathcal{D}A \ar[l]|{i^!}
\ar@<+3ex>[l] \ar@<-3ex>[l] \ar@<-1.5ex>[r]|{j^?}\ar@<+1.5ex>[r]|{j^*} &
\mathcal{D}C \ar[l]|{j_*}
\ar@<+3ex>[l] \ar@<-3ex>[l]}$$ be a $3$-recollement such that $i_*$
is nonnegative. Then there are two fully faithful
functors $\overline{i_*}: \underline{\Gproj }B \rightarrow \underline{\Gproj }A$
and
$\overline{i_*}: \underline{^\bot B} \rightarrow \underline{^\bot A}$.
\end{lemma}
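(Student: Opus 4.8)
The plan is to exhibit $\overline{i_*}$ as the functor induced by the full embedding $i_*$ on the appropriate stable quotients, and then to deduce full faithfulness from a single formal principle: an adjoint pair whose unit is invertible has a fully faithful left adjoint. The right adjoint will be supplied by $i^!$, and the invertibility of the unit by the fact that $i_*$ is a full embedding. Note that a $4$-recollement would give a whole lower recollement at these levels (Proposition~\ref{proposition-4-recoll}); with only a $3$-recollement I aim for the weaker output of a single fully faithful functor.

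First I would pin down how $i_*$ and $i^!$ act on the relevant subcategories. Since $i_*$ is nonnegative we have $i_*B \in K^b(\proj A)$, so $i_*$ restricts to $K^b(\proj B) \to K^b(\proj A)$, and by \cite[Theorem 5.3]{HP17} it carries $\Gproj B$ into $K^b(\Gproj A)$; through the equivalence $\underline{\Gproj} \cong D^b(-)_{fGd}/K^b(\proj)$ used in Proposition~\ref{proposition-4-recoll}, this yields $\overline{i_*}\colon \underline{\Gproj}B \to \underline{\Gproj}A$. For the right adjoint, the $3$-recollement hypothesis is exactly what is needed: writing the $i$-functors as an adjoint string $(i_1,i_2),(i_2,i_3),(i_3,i_4),(i_4,i_5)$ with $i_2=i_*$ and $i_3=i^!$, the functor $i_4$ (the one labelled $i_?$) admits the further right adjoint $i_5$ and so preserves coproducts, whence $i^!=i_3$ preserves compactness and restricts to $K^b(\proj A) \to K^b(\proj B)$. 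As $i^!$ has the left adjoint $i_*$ and both restrict to $K^b(\proj)$, Lemma~\ref{lemme-nonnegative} gives an integer $n$ with $i^![-n]$ nonnegative, and \cite[Theorem 5.3]{HP17} then forces $i^!$ to restrict to $D^b(A)_{fGd} \to D^b(B)_{fGd}$, inducing $\overline{i^!}$ on the $\underline{\Gproj}$-level.

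For the $\underline{\Gproj}$-statement I would then conclude formally. Because $i_*$ is a full embedding (axiom (R2)), the unit $\id \to i^! i_*$ is an isomorphism on $\mathcal{D}B$, hence on $D^b(B)_{fGd}$. The adjoint pair $(i_*,i^!)$ descends along the Verdier localisations to an adjoint pair $(\overline{i_*},\overline{i^!})$ on $\underline{\Gproj}$ (the standard descent of adjunctions, cf. \cite[Lemma 2.3]{Lu17}), and the descended unit $\id \to \overline{i^!}\,\overline{i_*}$ remains an isomorphism. An adjoint pair with invertible unit has fully faithful left adjoint, so $\overline{i_*}\colon \underline{\Gproj}B \to \underline{\Gproj}A$ is fully faithful.

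The main obstacle is the ${}^{\bot}$-case, where two points require care. First, one must show the stable functor $\overline{i_*}$ maps ${}^{\bot}B$ into ${}^{\bot}A$ (and $\overline{i^!}$ maps ${}^{\bot}A$ into ${}^{\bot}B$); I would establish this in parallel with the $\Gproj$ case, tracking the vanishing $\Ext^{>0}(-,A)=0$ through the nonnegative functors and their adjunction. Second, and more seriously, $\underline{{}^{\bot}A}$ is only a left triangulated category \cite[Theorem 2.12]{BM94} and is not presented as a Verdier quotient of a triangulated category, so the localisation calculus underlying the descent of the adjunction is unavailable. I would instead check directly that $\overline{i_*}$ and $\overline{i^!}$ are left triangle functors compatible with the loop functors (up to the syzygy shift introduced by passing through $i^![-n]$), and that the adjunction together with its invertible unit persists in this weaker framework; granting this, the same formal principle as above yields that $\overline{i_*}\colon \underline{{}^{\bot}B} \to \underline{{}^{\bot}A}$ is fully faithful.
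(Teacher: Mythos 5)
Your strategy is genuinely different from the paper's, and for the $\underline{\Gproj}$ half it is workable: the restrictions of $i_*$ and $i^!$ to $K^b(\proj)$ and $\mathcal{D}^b(\mod)$ that you invoke do hold for a $3$-recollement, the descent of the adjunction $(i_*,i^!)$ to the quotients $D^b(-)_{fGd}/K^b(\proj)\cong \underline{\Gproj}$ is legitimate, and an invertible unit does force the left adjoint to be fully faithful. The paper, however, never constructs a right adjoint at the stable level: it embeds both $\underline{\Gproj }B$ and $\underline{^\bot B}$ fully faithfully into $D_{sg}(B)$ (via \cite[Propositions 4.8 and 5.2]{HP17} and a Rickard-type argument), observes that $i_*, i^!, j^*, j_*$ induce a lower recollement of singularity categories by \cite[Lemma 2.3]{Lu17} so that $i_*:D_{sg}(B)\to D_{sg}(A)$ is fully faithful, and reads off full faithfulness of $\overline{i_*}$ from the commutative squares. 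That route uses only properties of $i_*$ itself and treats the two cases uniformly.

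The genuine gap in your proposal is exactly where you write ``granting this'': the ${}^\bot$ half. Your argument needs $\overline{i^!}$ to carry $\underline{^\bot A}$ into $\underline{^\bot B}$, equivalently (via \cite[Proposition 5.1.7]{AHV16} and Lemma~\ref{lemma-kbb}) $i^!$ to restrict to $K^{-,bb}(\proj)$. Testing $i^!Y$ against $Q\in K^b(\proj B)$ through the adjunction $(i^!,i_?)$ gives $\Hom_{\mathcal{D}B}(i^!Y,Q[i])\cong \Hom_{\mathcal{D}A}(Y,i_?Q[i])$, and to make this vanish for $i\gg 0$ one needs $i_?Q\in K^b(\proj A)$, i.e. $i_?$ to preserve compact objects --- a condition guaranteed only by the sixth layer of functors, that is, by a $4$-recollement. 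This is precisely why Proposition~\ref{proposition-4-recoll-left-orth} is stated for $4$-recollements. Under the $3$-recollement hypothesis of the lemma there is no reason for the right adjoint you want to exist at the $\underline{^\bot}$ level, so the ``invertible unit'' principle cannot be applied there; full faithfulness of $\overline{i_*}$ on $\underline{^\bot B}$ has to be obtained by a one-sided argument, such as the paper's passage through $D_{sg}$.
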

\begin{proof}
From \cite[Proposition 4.8 and Proposition 5.2]{HP17}, we have
the following
communicative diagrams
$$\xymatrix{\underline{\Gproj }B  \ar[r]^{\overline{i_*}}\ar @{^{(}->}[d] & \underline{\Gproj }A
\ar @{^{(}->}[d]
\\D_{sg}(B) \ar[r]^{i_*}  & D_{sg}(A)}\begin{array}{c}
\\  \\ \mbox{and} \\  \end{array}
\xymatrix{\underline{^\bot B}  \ar[r]^{\overline{i_*}}\ar @{^{(}->}[d] & \underline{^\bot A}
\ar @{^{(}->}[d]
\\D_{sg}(B) \ar[r]^{i_*}  & D_{sg}(A).}$$
Here, the canonical embedding $\underline{\Gproj }A \hookrightarrow D_{sg}(A)$
is clear, and the fully faithfulness of $\underline{^\bot A} \hookrightarrow D_{sg}(A)$
can be proved using the same argument as \cite[Theorem 2.1]{Rick89}.

According to \cite[Lemma 2.9]{AKLY17},
the functors $i_*, i^!, j^*$ and $j_*$ restrict to both
$ K^b(\proj )$ and $\mathcal{D}^b(\mod)$.
Then, it follows from \cite[Lemma 2.3]{Lu17} that
$(D_{sg}(B),D_{sg}(A),D_{sg}(C), i_*, i^!, j^*, j_*)$
is a lower recollement. Therefore, the functor
$i_* : D_{sg}(B)\rightarrow D_{sg}(A)$ is fully faithful.
Now the proof is finished, just using the above
communicative diagrams.

\end{proof}

\begin{theorem}\label{theorem-3-GPC-ARC}
Let $A$, $B$ and $C$ be Artin $R$-algebras, and $\mathcal{D}A$ admit a $3$-recollement relative
to $\mathcal{D}B$ and $\mathcal{D}C$. If
$A$ satisfies the Gorenstein projective
conjecture (resp. Auslander-Reiten conjecture), then so does $B$.
\end{theorem}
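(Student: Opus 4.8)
The plan is to transport the self-orthogonality hypothesis from $B$ to $A$ through the fully faithful stable functor supplied by Lemma~\ref{lemma-3-recoll}, to apply the conjecture over $A$, and then to reflect projectivity back to $B$ using full faithfulness. First, by Lemma~\ref{lemme-nonnegative} we may assume that $i_*$ is nonnegative, so that Lemma~\ref{lemma-3-recoll} applies and provides fully faithful functors $\overline{i_*}: \underline{\Gproj}B \rightarrow \underline{\Gproj}A$ and $\overline{i_*}: \underline{^\bot B} \rightarrow \underline{^\bot A}$; moreover $\overline{i_*}$ sends each module to an honest module in $\Gproj A$ (resp. ${}^\bot A$), namely the cokernel appearing in the construction of the stable functor recalled before Lemma~\ref{lemme-nonnegative}. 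The two ingredients I shall use repeatedly are: (i) the standard identification $\Ext^i_A(X,Y)\cong \underline{\Hom}_A(\Omega^i X, Y)$ for $i\geq 1$, together with the fact that both $\Gproj A$ and ${}^\bot A$ are closed under the syzygy $\Omega$; and (ii) that $\overline{i_*}$ commutes with $\Omega$, being a functor of (left) triangulated categories. Combining (i), (ii) and full faithfulness yields, for $X$ in $\Gproj B$ or in ${}^\bot B$,
$$\Ext^i_A(\overline{i_*}X,\overline{i_*}X)\cong \underline{\Hom}_A(\Omega^i\overline{i_*}X,\overline{i_*}X)\cong \underline{\Hom}_B(\Omega^i X, X)\cong \Ext^i_B(X,X), \quad i\geq 1.$$

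For the Gorenstein projective conjecture, I would take $M\in \Gproj B$ with $\Ext^i_B(M,M)=0$ for all $i\geq 1$. The displayed isomorphism then forces $\Ext^i_A(\overline{i_*}M,\overline{i_*}M)=0$ for all $i\geq 1$, and since $\overline{i_*}M\in \Gproj A$, the conjecture over $A$ shows $\overline{i_*}M$ is projective, i.e. $\overline{i_*}M=0$ in $\underline{\Gproj}A$. Full faithfulness of $\overline{i_*}$ gives $\underline{\Hom}_B(M,M)\cong \underline{\Hom}_A(\overline{i_*}M,\overline{i_*}M)=0$, whence $\id_M=0$ in $\underline{\Gproj}B$ and $M$ is projective. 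For the Auslander-Reiten conjecture, I would take a finitely generated $M$ with $\Ext^i_B(M,M\oplus B)=0$ for $i\geq 1$. Vanishing of $\Ext^i_B(M,B)$ means $M\in {}^\bot B$, so $\overline{i_*}M\in {}^\bot A$, i.e. $\Ext^i_A(\overline{i_*}M,A)=0$ for $i\geq 1$; the displayed isomorphism gives $\Ext^i_A(\overline{i_*}M,\overline{i_*}M)=0$ as well, so that $\Ext^i_A(\overline{i_*}M,\overline{i_*}M\oplus A)=0$. The conjecture over $A$ makes $\overline{i_*}M$ projective, and full faithfulness in $\underline{^\bot B}$ again reflects this to show $M$ projective.

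The main obstacle I anticipate is justifying the displayed chain of isomorphisms cleanly: one must check that the stable functor $\overline{i_*}$ of Lemma~\ref{lemma-3-recoll} genuinely commutes with the syzygy functor on ${}^\bot B$, where only a left triangulated structure is available, and that it carries each module to a module lying in ${}^\bot A$, so that the comparison of $\Ext$-groups is legitimate and the conjecture over $A$ can be invoked on an actual module. Once this compatibility is secured, the reflection of projectivity is immediate from full faithfulness, since a fully faithful functor sends the nonzero identity of a nonzero object to a nonzero identity, and hence reflects the zero object.
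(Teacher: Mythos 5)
Your proposal is correct and follows essentially the same route as the paper: transport the $\Ext$-vanishing through the fully faithful stable functor $\overline{i_*}$ of Lemma~\ref{lemma-3-recoll} via $\Omega^i_A\overline{i_*}\cong\overline{i_*}\Omega^i_B$ and the identification $\Ext^i\cong\underline{\Hom}(\Omega^i-,-)$ on $\Gproj$ and ${}^\bot(-)$ (the paper justifies precisely the two compatibilities you flag as the main obstacle by citing \cite{HP17} and \cite{Zhang12}), and then apply the conjecture over $A$. The only divergence is the final reflection of projectivity: the paper deduces $i_*X\in K^b(\proj A)$ and uses $X\cong i^*i_*X\in K^b(\proj B)$ together with $X\in\Gproj B$, whereas you observe that a fully faithful functor reflects zero objects of the stable category; both are valid, and your version is marginally more economical since it does not invoke the left adjoint $i^*$.
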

\begin{proof}
Assume $A$ satisfy the Gorenstein projective
conjecture. Applying shift functors, we may assume the functor $i_*$
in the $3$-recollement is nonnegative,
as in Lemma~\ref{lemma-3-recoll}. Then the functor $\overline{i_*} :
\underline{\Gproj }B \rightarrow \underline{\Gproj }A$ is fully faithful.
Let $X \in \Gproj B$ with $\Ext _B^i(X, X)=0$, for any $i>0$.
Then $$\Ext _A^i(\overline{i_*}X, \overline{i_*}X) \cong
\underline{\Hom} _A(\Omega ^i_A\overline{i_*}X, \overline{i_*}X)=
\Hom _{\underline{\Gproj }A}(\Omega ^i_A\overline{i_*}X, \overline{i_*}X),$$
where the first isomorphism is proved in \cite[Lemma 3.3]{Zhang12}.
On the other hand, it follows from \cite[Corollary 4.12]{HP17} that
$\Omega ^i_A\overline{i_*}\cong \overline{i_*}\Omega ^i_B$. There, the above
$\Hom$-space is isomorphic to
$$\Hom _{\underline{\Gproj }A}(\overline{i_*}\Omega ^i_BX, \overline{i_*}X)\cong
\Hom _{\underline{\Gproj }B}(\Omega ^i_BX, X)\cong \Ext _B^i(X, X).$$
As a result, we get $\Ext _A^i(\overline{i_*}X, \overline{i_*}X) \cong 0$,
for any $i>0$. So, by assumption, we see that the $A$-module $\overline{i_*}X$
is projective, and by the definition of $\overline{i_*}$, we obtain that
$i_*X\in K^b(\proj A)$. Therefore, $X\cong i^*i_*X \in K^b(\proj B)$, and this
is equivalent to $\pd X<\infty$. Now note that $X \in \Gproj B$, and thus, $X$ is a projective $B$-module.

The statement on Auslander-Reiten conjecture can be proved similarly,
just replacing $\Gproj B$ (resp. $\Gproj A$) with $^\bot B$ (resp. $^\bot A $).

\end{proof}

\begin{corollary}\label{corollary-4-ARC-GPC}
Let $A$, $B$ and $C$ be Artin $R$-algebras, and $\mathcal{D}A$ admit a $4$-recollement relative
to $\mathcal{D}B$ and $\mathcal{D}C$. If
$A$ satisfies the Gorenstein projective
conjecture (resp. Auslander-Reiten conjecture), then so do $B$ and $C$.
\end{corollary}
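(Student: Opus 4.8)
The plan is to deduce the corollary from Theorem~\ref{theorem-3-GPC-ARC} by extracting two suitable $3$-recollements from the given $4$-recollement. Recall that an $n$-recollement consists of $n+2$ layers of triangle functors, every three consecutive of which form a recollement; so the $4$-recollement here has six layers $i_1,\dots,i_6$ and $j_1,\dots,j_6$, and a $3$-recollement is precisely a window of five consecutive layers in which every three consecutive still form a recollement.

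First I would consider the five layers numbered $1$ through $5$. Since every three consecutive layers among these form a recollement (inherited from the $4$-recollement), this is a $3$-recollement of $\mathcal{D}A$, and reading off the directions of the functors in the base triple $(i_1,i_2,i_3,j_1,j_2,j_3)$ shows that $\mathcal{D}B$ sits in the embedded (left) position and $\mathcal{D}C$ in the quotient (right) position, exactly as required by Theorem~\ref{theorem-3-GPC-ARC}. Applying that theorem gives that $B$ satisfies the Gorenstein projective conjecture (resp. Auslander-Reiten conjecture) whenever $A$ does.

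Next I would shift the window by one and consider the five layers numbered $2$ through $6$. These again form a $3$-recollement of $\mathcal{D}A$, but by the intrinsic symmetry of $n$-recollements the shift by a single layer interchanges the two outer categories: in the base triple $(i_2,i_3,i_4,j_2,j_3,j_4)$ the functors $i_2,i_4\colon \mathcal{D}B\to\mathcal{D}A$ now point into $\mathcal{D}A$ with the single functor $i_3$ pointing out, while $j_3\colon\mathcal{D}C\to\mathcal{D}A$ is the unique functor into $\mathcal{D}A$ on the $\mathcal{D}C$-side with $j_2,j_4$ pointing out. Hence $\mathcal{D}C$ now occupies the embedded (left) position and $\mathcal{D}B$ the quotient (right) position. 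Applying Theorem~\ref{theorem-3-GPC-ARC} to this $3$-recollement, with $C$ playing the role of $B$, yields that $C$ also satisfies the Gorenstein projective conjecture (resp. Auslander-Reiten conjecture) whenever $A$ does. Together these two applications prove the corollary.

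The only point requiring care---and what I would regard as the main, though routine, obstacle---is verifying that the shifted window genuinely presents $\mathcal{D}C$ in the left-hand, embedded position of a recollement rather than in the right-hand, quotient position; this amounts to checking the directions of the adjoint functors in the consecutive triple $(i_2,i_3,i_4,j_2,j_3,j_4)$ against conditions (R1)--(R4), which is immediate from the alternating pattern of the layers. No homological input beyond Theorem~\ref{theorem-3-GPC-ARC} is needed.
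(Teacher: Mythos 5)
Your proposal is correct and follows exactly the paper's own argument: the paper likewise extracts the $3$-recollement $(\mathcal{D}B,\mathcal{D}A,\mathcal{D}C,i_1,\dots,i_5,j_1,\dots,j_5)$ from layers $1$--$5$ and the role-swapped $3$-recollement $(\mathcal{D}C,\mathcal{D}A,\mathcal{D}B,j_2,\dots,j_6,i_2,\dots,i_6)$ from layers $2$--$6$, then applies Theorem~\ref{theorem-3-GPC-ARC} to each. Your explicit check that shifting the window by one layer interchanges the embedded and quotient positions is the same observation the paper encodes implicitly in its notation.
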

\begin{proof}
Let $(\mathcal{D}B, \mathcal{D}A,
\mathcal{D}C, i_1,i_2, \cdots ,i_{6}, j_1, j_2, \cdots ,
j_{6})$ be a $4$-recollement. Then both $(\mathcal{D}B, \mathcal{D}A,
\mathcal{D}C, i_1,i_2, \cdots ,i_{5}, j_1, j_2, \cdots ,
j_{5})$ and $(\mathcal{D}C, \mathcal{D}A,
\mathcal{D}B, j_2, j_3, \cdots ,
\linebreak
j_{6}, i_2,i_3, \cdots ,i_{6}, )$
are $3$-recollements.
Therefore, the statement follows
from Theorem~\ref{theorem-3-GPC-ARC}.
\end{proof}

In \cite{PZ15}, Pan and Zhang proved that Gorenstein projective conjecture is
invariant under standard derived equivalence, and they asked
whether it is also true for general derived equivalence. Now
we will give a positive answer by applying Theorem~\ref{theorem-3-GPC-ARC}
to derived equivalence.

\begin{corollary}
Let $A$ and $B$ be Artin $R$-algebras which are derived equivalent.
Then $A$ satisfy the Gorenstein projective conjecture if and
only if so does $B$.
\end{corollary}
\begin{proof}
Since there are two $3$-recollements
$(\mathcal{D}B, \mathcal{D}A,0)$ and $(\mathcal{D}A, \mathcal{D}B,0)$,
this corollary follows from Theorem~\ref{theorem-3-GPC-ARC}.
\end{proof}

Following \cite{AHV16}, we denote $K^{-,bb}(\proj A)$ the full subcategory
of $K^{-}(\proj A)$ consisting of complexes
$X^\bullet$ such that there is an integer $n = n(X^\bullet)$, in which $H^i (X) = 0$, for $i \leq n$ and
$H^i (\Hom _A(X^\bullet,A)) = 0$, for $i \geq -n$. By \cite[Proposition
5.1.7]{AHV16}, there exists an equivalence
$\underline{^\bot A} \cong K^{-,bb}(\proj A)/K^b(\proj A)$
of left triangulated categories.

\begin{lemma}\label{lemma-kbb}
The category $K^{-,bb}(\proj A)$ is the full subcategory of $D^b(\mod A)$
consisting of those objects $X^\bullet$ with the following property: for any $P^\bullet \in K^b(\proj A)$,
there exists an integer $n$ such that $\Hom _{DA}(X^\bullet, P^\bullet [i])=0$,
for any $i> n$.
\end{lemma}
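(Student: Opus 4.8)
The plan is to prove the two inclusions at once by cutting $K^{-,bb}(\proj A)$ out of $D^b(\mod A)$ by a single cohomological condition and then matching that condition with the stated $\Hom$-vanishing. First I would recall that every object of $D^b(\mod A)$ is represented by its projective resolution, which lies in $K^{-}(\proj A)$ and has bounded cohomology; conversely, a complex in $K^{-}(\proj A)$ with bounded cohomology represents an object of $D^b(\mod A)$ via truncation. Under this identification, the two defining conditions of $K^{-,bb}(\proj A)$ read: (a) $H^i(X^\bullet)=0$ for $i\ll 0$, which together with $X^\bullet\in K^{-}(\proj A)$ is exactly the requirement $X^\bullet\in D^b(\mod A)$; and (b) $H^i(\Hom_A(X^\bullet,A))=0$ for $i\gg 0$, i.e.\ the dual cohomology is bounded above. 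So it suffices to show that, for $X^\bullet\in D^b(\mod A)$, condition (b) is equivalent to the $\Hom$-vanishing property in the statement.

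The key reduction is to the test object $P^\bullet=A$. Fixing $X^\bullet$, let $\mathcal{P}$ denote the full subcategory of $K^b(\proj A)$ consisting of those $P^\bullet$ for which there is an integer $n$ with $\Hom_{\mathcal{D}A}(X^\bullet,P^\bullet[i])=0$ for all $i>n$. I would check that $\mathcal{P}$ is a thick subcategory: it is closed under shifts (which merely translate the bound $n$), under finite direct sums and direct summands (the $\Hom$-functor being additive), and under cones, the last point following from the long exact sequence attached to a triangle $P^\bullet\to Q^\bullet\to R^\bullet\to P^\bullet[1]$, in which $\Hom_{\mathcal{D}A}(X^\bullet,R^\bullet[i])$ is squeezed between two groups that vanish for $i$ large. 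Since $A\in\mathcal{P}$ and $\thick(A)=K^b(\proj A)$, we get $\mathcal{P}=K^b(\proj A)$; equivalently, one may filter an arbitrary $P^\bullet\in K^b(\proj A)$ by its brutal truncations, whose subquotients are shifts of summands of copies of $A$. Hence the $\Hom$-vanishing property for all $P^\bullet\in K^b(\proj A)$ is equivalent to the single instance $P^\bullet=A$.

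It remains to identify $\Hom_{\mathcal{D}A}(X^\bullet,A[i])$ with the dual cohomology. Representing $X^\bullet$ by a complex in $K^{-}(\proj A)$, which is $K$-projective, I would use $\Hom_{\mathcal{D}A}(X^\bullet,-)=\Hom_{K(\mod A)}(X^\bullet,-)$ to obtain $\Hom_{\mathcal{D}A}(X^\bullet,A[i])\cong H^i(\Hom_A(X^\bullet,A))$; note that, $X^\bullet$ being $K$-projective, the right-hand side is the derived dual and hence a well-defined invariant on $D^b(\mod A)$. Thus the instance $P^\bullet=A$ of the $\Hom$-vanishing property says precisely that $H^i(\Hom_A(X^\bullet,A))=0$ for $i\gg 0$, which is condition (b). Combining this with the first paragraph yields the claimed equality of subcategories. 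The main obstacle is the reduction in the second paragraph: one must keep track of the fact that the bound $n$ is allowed to depend on $P^\bullet$ (as in the statement), so that the class of admissible test objects is genuinely a thick subcategory and the generation $\thick(A)=K^b(\proj A)$ applies; the rest is bookkeeping with the $K$-projective computation and the standard identification of $D^b(\mod A)$ with the bounded-cohomology objects of $K^{-}(\proj A)$.
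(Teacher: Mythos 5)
Your proposal is correct and follows essentially the same route as the paper: the paper also reduces the test condition to the single object $P^\bullet=A$ (by induction on the length of $P^\bullet$, which is your thick-subcategory/brutal-truncation argument in compressed form) and obtains the converse by taking $P^\bullet=A$. Your write-up merely makes explicit the identification $\Hom_{\mathcal{D}A}(X^\bullet,A[i])\cong H^i(\Hom_A(X^\bullet,A))$ via $K$-projectivity, which the paper leaves implicit.
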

\begin{proof}
Assume $X^\bullet \in K^{-,bb}(\proj A)$. Then there exists an integer $n$ such that
$\Hom _{DA}(X^\bullet, A [i])=0$, for any $i> n$. Doing induction
on the length
of the complex $P^\bullet$, we get $\Hom _{DA}(X^\bullet, P^\bullet [i])=0$,
for sufficient large $i$. Conversely, if $X^\bullet$ has the desired property,
then we obtain $X^\bullet \in K^{-,bb}(\proj A)$ by taking $P^\bullet =A$.
\end{proof}

\begin{proposition}\label{proposition-4-recoll-left-orth}
Let $A$, $B$ and $C$ be Artin $R$-algebras, and
$$\xymatrix@!=6pc{ \mathcal{D}B \ar@<2.4ex>[r]|{i_*} \ar@<-4ex>[r]
\ar@<-0.8ex>[r]|{i_?}  & \mathcal{D}A \ar@<-4ex>[l]
\ar@<-0.8ex>[l]|{i^!} \ar@<+2.4ex>[l] \ar@<-0.8ex>[r]|{j^?}
\ar@<2.4ex>[r]|{j^*}  \ar@<-4ex>[r] & \mathcal{D}C \ar@<-4ex>[l]
\ar@<-0.8ex>[l]|{j_*} \ar@<2.4ex>[l]}$$ be a $4$-recollement.
Then there is an induced lower recollement of left triangulated categories
 $$\xymatrix@!=6pc{\underline{^\bot B}
\ar@<1ex>[r]|{\widetilde {i_*}}& \underline{^\bot A} \ar@<1ex>[l]|{\widetilde{i^!}}
\ar@<1ex>[r]|{\widetilde{j^*}}  & \underline{^\bot C} \ar@<1ex>[l]|{\widetilde{j_*}}
 } .$$
\end{proposition}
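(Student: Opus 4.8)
The plan is to follow the proof of Proposition~\ref{proposition-4-recoll} essentially line by line, replacing the subcategory $D^b(-)_{fGd}$ by $K^{-,bb}(\proj -)$ and the triangle equivalence $\underline{\Gproj}A\cong D^b(A)_{fGd}/K^b(\proj A)$ by the equivalence $\underline{^\bot A}\cong K^{-,bb}(\proj A)/K^b(\proj A)$ of left triangulated categories furnished by \cite[Proposition 5.1.7]{AHV16}. By \cite[Lemma 2.9]{AKLY17} the functors $i_*,i^!,j^*,j_*$, as well as the right adjoint $i_?$ of $i^!$ and the right adjoint $j^?$ of $j_*$, all restrict to $K^b(\proj)$ and to $D^b(\mod)$; in particular $i_*,i^!,j^*,j_*$ preserve the bounded derived categories, inside which Lemma~\ref{lemma-kbb} realises $K^{-,bb}(\proj -)$ as full subcategories. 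It therefore suffices to check that these four functors preserve $K^{-,bb}(\proj -)$ and then to descend along the quotient by $K^b(\proj)$.

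First I would establish the restriction to $K^{-,bb}(\proj -)$ using Lemma~\ref{lemma-kbb} together with adjunction. The point is that each of $i_*,i^!,j^*,j_*$ admits, inside the given $4$-recollement, a right adjoint that preserves $K^b(\proj)$: namely $i^!$ is right adjoint to $i_*$, $i_?$ is right adjoint to $i^!$, $j_*$ is right adjoint to $j^*$, and $j^?$ is right adjoint to $j_*$. For example, given $X^\bullet\in K^{-,bb}(\proj A)$ and any $P^\bullet\in K^b(\proj B)$, adjunction gives
$$\Hom_{\mathcal{D}B}(i^!X^\bullet,P^\bullet[i])\cong\Hom_{\mathcal{D}A}(X^\bullet,i_?P^\bullet[i]),$$
and since $i_?P^\bullet\in K^b(\proj A)$, Lemma~\ref{lemma-kbb} provides an integer $n$ with $\Hom_{\mathcal{D}A}(X^\bullet,i_?P^\bullet[i])=0$ for all $i>n$; reading Lemma~\ref{lemma-kbb} in the other direction then gives $i^!X^\bullet\in K^{-,bb}(\proj B)$. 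The same computation, using the adjoint pairs $(i_*,i^!)$, $(j^*,j_*)$ and $(j_*,j^?)$, shows that $i_*$, $j^*$ and $j_*$ likewise restrict to $K^{-,bb}(\proj -)$. This is exactly where the hypothesis of a $4$-recollement enters: the extra right adjoints $i_?$ and $j^?$ come from the additional layers of the ladder. Note that, in contrast with Proposition~\ref{proposition-4-recoll}, no nonnegativity hypothesis is required, since Lemma~\ref{lemma-kbb} describes $K^{-,bb}(\proj -)$ purely in terms of Hom-vanishing, which passes directly through the adjunctions.

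Once the four functors are known to preserve $K^{-,bb}(\proj -)$, a lower recollement at the $K^{-,bb}(\proj -)$-level is obtained by restricting the ambient recollement first to $D^b(\mod)$ and then to $K^{-,bb}(\proj -)$: the adjunctions $(i_*,i^!)$ and $(j^*,j_*)$, the full faithfulness of $i_*$ and $j_*$, the identity $j^*i_*=0$, and the defining triangle $i_*i^!X^\bullet\rightarrow X^\bullet\rightarrow j_*j^*X^\bullet\rightarrow$ are all inherited by the full triangulated subcategory $K^{-,bb}(\proj -)$, the last because that subcategory is closed under the relevant cones. Since $i_*,i^!,j^*,j_*$ also restrict to $K^b(\proj)$, they induce functors on the Verdier quotients $K^{-,bb}(\proj -)/K^b(\proj -)$; invoking the equivalence $\underline{^\bot A}\cong K^{-,bb}(\proj A)/K^b(\proj A)$ of \cite[Proposition 5.1.7]{AHV16} (and its analogues for $B$ and $C$) together with \cite[Lemma 2.3]{Lu17} then yields the asserted lower recollement of $\underline{^\bot B}$, $\underline{^\bot A}$, $\underline{^\bot C}$, whose functors are the induced $\widetilde{i_*},\widetilde{i^!},\widetilde{j^*},\widetilde{j_*}$.

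The step I expect to be delicate is this final descent, because here the quotients are left triangulated rather than triangulated. One must check that \cite[Lemma 2.3]{Lu17} (or a left triangulated version of it) applies to the induced quotient functors, and that the equivalences of \cite[Proposition 5.1.7]{AHV16} intertwine the restrictions of $i_*,i^!,j^*,j_*$ with $\widetilde{i_*},\widetilde{i^!},\widetilde{j^*},\widetilde{j_*}$ compatibly with the loop functors, so that the resulting diagram is genuinely a lower recollement of left triangulated categories. By comparison, the restriction to $K^{-,bb}(\proj -)$ is routine once Lemma~\ref{lemma-kbb} is in hand, so the main work lies in the left triangulated bookkeeping of the quotient.
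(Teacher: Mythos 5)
Your proposal is correct and follows essentially the same route as the paper: restrict $i_*, i^!, j^*, j_*$ to $D^b(\mod)$ and $K^b(\proj)$ via \cite[Lemma 2.9]{AKLY17}, use Lemma~\ref{lemma-kbb} (through the adjoint pairs $(i_*,i^!)$, $(i^!,i_?)$, $(j^*,j_*)$, $(j_*,j^?)$, whose right adjoints preserve $K^b(\proj)$) to restrict them to $K^{-,bb}(\proj -)$, and then descend to the quotients by $K^b(\proj)$ via \cite[Lemma 2.3]{Lu17} and the equivalence $\underline{^\bot A}\cong K^{-,bb}(\proj A)/K^b(\proj A)$ of \cite[Proposition 5.1.7]{AHV16}. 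The adjunction computation you spell out is exactly the step the paper leaves implicit when it cites the restriction of $i_?$ and $j^?$ to $K^b(\proj)$.
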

\begin{proof}
According to \cite[Lemma 2.9]{AKLY17},
the functors $i_*, j^*, i^!, j_*$ restrict to $\mathcal{D}^b(\mod)$
and $ K^b(\proj )$,
and the functors $i_?$ and $j^?$ restrict
to $ K^b(\proj )$. By Lemma~\ref{lemma-kbb}, the functors
$i_*, j^*, i^!$ and $j_*$ restrict
to $ K^{-,bb}(\proj )$. Thus, by \cite[Lemma 2.3]{Lu17}, there is a lower recollement
 $$\xymatrix@!=6pc{\frac {K^{-,bb}(\proj B)}{K^b(\proj B)}
\ar@<1ex>[r]|{\widetilde {i_*}}& \frac {K^{-,bb}(\proj A)}{K^b(\proj A)} \ar@<1ex>[l]|{\widetilde{i^!}}
\ar@<1ex>[r]|{\widetilde{j^*}}  & \frac {K^{-,bb}(\proj C)}{K^b(\proj C)} \ar@<1ex>[l]|{\widetilde{j_*}}
 }.$$
 Now, the equivalence
$\underline{^\bot A} \cong K^{-,bb}(\proj A)/K^b(\proj A)$ implies the desired lower recollement.

\end{proof}

\begin{remark}
{\rm A special case of Proposition~\ref{proposition-4-recoll-left-orth}
is considered in \cite[Proposition 5.1.11]{AHV16}.}

\end{remark}

\begin{theorem}\label{theorem-3}
Let $A$, $B$ and $C$ be Artin $R$-algebras,
and $\mathcal{D}A$ admit a $3$-recollement relative
to $\mathcal{D}B$ and $\mathcal{D}C$. Assume $\gl C<\infty$.
Then $A$ satisfies Gorenstein projective
conjecture (resp. Auslander-Reiten conjecture)
if and only if so does $B$.
\end{theorem}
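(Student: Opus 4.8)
The forward implications are already in hand: by Theorem~\ref{theorem-3-GPC-ARC}, any $3$-recollement forces $A$-GPC (resp. $A$-ARC) to descend to $B$, so the entire content of the converse is the statement that when $\gl C<\infty$ the conjecture for $B$ lifts back to $A$. The plan is therefore to prove: if $\gl C<\infty$ and $B$ satisfies GPC (resp. ARC), then so does $A$. As in Lemma~\ref{lemma-3-recoll} I would first apply a shift so that $i_*$ is nonnegative, and record the two fully faithful comparison functors $\overline{i_*}\colon\underline{\Gproj}B\to\underline{\Gproj}A$ and $\widetilde{i_*}\colon\underline{^\bot B}\to\underline{^\bot A}$ furnished by that lemma.

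The crux is to upgrade these fully faithful functors to equivalences under the hypothesis $\gl C<\infty$. The plan is to reproduce the arguments of Proposition~\ref{proposition-4-recoll} and Proposition~\ref{proposition-4-recoll-left-orth}, observing that each uses only the lower two rows $i_*,i^!,j^*,j_*$ of the recollement together with the restriction of their left adjoints $i_*,j_!$ to $K^b(\proj)$, data already available in a $3$-recollement. Concretely, by \cite[Lemma 2.9]{AKLY17} (as used in Lemma~\ref{lemma-3-recoll}) the functors $i_*,i^!,j^*,j_*$ restrict to $K^b(\proj)$ and $D^b(\mod)$, while $j_!$ restricts to $K^b(\proj)$; this is exactly because the extra adjoint layers of a $3$-recollement make the relevant functors preserve coproducts, hence compacts. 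For the $\underline{\Gproj}$-level, Lemma~\ref{lemme-nonnegative} then renders suitable shifts of $i^!,j^*,j_*$ nonnegative and \cite[Theorem 5.3]{HP17} gives preservation of $D^b(-)_{fGd}$; for the $\underline{^\bot}$-level, Lemma~\ref{lemma-kbb} together with the restrictions to $K^b(\proj)$ and $D^b(\mod)$ gives preservation of $K^{-,bb}(\proj)$. Passing to the Verdier quotient by $K^b(\proj)$ and invoking \cite[Lemma 2.3]{Lu17}, I obtain lower recollements of $\underline{\Gproj}$-level and of $\underline{^\bot}$-level. Since $\gl C<\infty$ gives $\Gproj C=\proj C$ and $K^{-,bb}(\proj C)=K^b(\proj C)$, the right-hand terms $\underline{\Gproj}C$ and $\underline{^\bot C}$ both vanish; the defining triangle $\overline{i_*}\,\overline{i^!}Z\to Z\to\overline{j_*}\,\overline{j^*}Z\to$ (resp. its $\widetilde{(-)}$-analogue) then degenerates to $Z\cong\overline{i_*}\,\overline{i^!}Z$, so $\overline{i_*}$ and $\widetilde{i_*}$ are essentially surjective, hence triangle equivalences.

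With the equivalence in hand the GPC case closes quickly. Given $M\in\Gproj A$ with $\Ext_A^i(M,M)=0$ for all $i>0$, essential surjectivity of $\overline{i_*}$ produces $N\in\Gproj B$ with $\overline{i_*}N\cong M$ in $\underline{\Gproj}A$. I would then transport the $\Ext$-vanishing exactly as in Theorem~\ref{theorem-3-GPC-ARC}: combining $\Ext_A^i(-,-)\cong\Hom_{\underline{\Gproj}A}(\Omega^i_A-,-)$ from \cite[Lemma 3.3]{Zhang12}, the isomorphism $\Omega^i_A\overline{i_*}\cong\overline{i_*}\Omega^i_B$ from \cite[Corollary 4.12]{HP17}, and full faithfulness of $\overline{i_*}$, one obtains $\Ext_B^i(N,N)\cong\Hom_{\underline{\Gproj}B}(\Omega^i_BN,N)\cong\Hom_{\underline{\Gproj}A}(\Omega^i_A\overline{i_*}N,\overline{i_*}N)\cong\Ext_A^i(\overline{i_*}N,\overline{i_*}N)=\Ext_A^i(M,M)=0$ for all $i>0$, where the penultimate equality holds because a stable isomorphism alters objects only by projective summands, which do not affect $\Ext^{>0}$. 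By GPC for $B$ the module $N$ is projective, so $M\cong\overline{i_*}N\cong 0$ in $\underline{\Gproj}A$, i.e. $M$ is projective.

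The ARC case is parallel, working in $\underline{^\bot}$ via $\widetilde{i_*}$. The one extra observation is that if $M\in{}^\bot A$ satisfies $\Ext_A^i(M,M)=0$ and $\widetilde{i_*}N\cong M$ with $N\in{}^\bot B$, then the membership $N\in{}^\bot B$ already yields $\Ext_B^i(N,B)=0$, so the full hypothesis $\Ext_B^i(N,N\oplus B)=0$ of ARC is met and $B$-ARC makes $N$, hence $M$, projective. The main obstacle I anticipate is the second paragraph: one must verify carefully that the quotient argument of Propositions~\ref{proposition-4-recoll} and \ref{proposition-4-recoll-left-orth} needs nothing beyond the lower-recollement functors and the compactness-preservation supplied by the extra adjoint layers, so that the hypothesis can genuinely be weakened from a $4$-recollement to a $3$-recollement. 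Everything after the equivalence is a routine reprise of Theorem~\ref{theorem-3-GPC-ARC}.
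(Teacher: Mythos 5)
Your overall architecture (lower recollements at the $\underline{\Gproj}$- and $\underline{^\bot}$-levels, vanishing of the $C$-terms because $\gl C<\infty$, then the $\Ext$-transport of Theorem~\ref{theorem-3-GPC-ARC}) is the right one, and your first and third paragraphs are fine. The gap is exactly the point you flag as ``the main obstacle'': Propositions~\ref{proposition-4-recoll} and~\ref{proposition-4-recoll-left-orth} do \emph{not} follow from a bare $3$-recollement, and their proofs use more than ``the lower two rows together with the restriction of their left adjoints to $K^b(\proj)$.'' Concretely, in Proposition~\ref{proposition-4-recoll-left-orth} the functors $i^!$ and $j_*$ are shown to preserve $K^{-,bb}(\proj)$ via Lemma~\ref{lemma-kbb} through the adjunctions $\Hom(i^!X,Q[n])\cong\Hom(X,i_?Q[n])$ and $\Hom(j_*Y,P[n])\cong\Hom(Y,j^?P[n])$; this needs the \emph{right} adjoints $i_?$ and $j^?$ (the fourth layers) to restrict to $K^b(\proj)$, which by the compacts/coproducts criterion behind \cite[Lemma 2.9]{AKLY17} requires a sixth layer, i.e.\ a genuine $4$-recollement. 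The proof of Proposition~\ref{proposition-4-recoll} likewise invokes the restriction of $i_?$ and $j^?$ to $K^b(\proj)$ and $D^b(\mod)$. A $3$-recollement only guarantees this for the layers up to $i^!$ and $j_*$. Indeed, if your weakened claim were true at the $\underline{\Gproj}$-level, then in Example 1 the $3$-recollement $(\mathcal{D}B,\mathcal{D}A,\mathcal{D}C)$ sitting inside the given $4$-recollement (its layers $2$--$6$ with the roles of $B$ and $C$ swapped) would already produce the lower recollement $(\underline{\Gproj}B,\underline{\Gproj}A,\underline{\Gproj}C)$, which is precisely the recollement that example shows cannot exist.

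The repair --- and the paper's actual route --- is to use $\gl C<\infty$ \emph{before} invoking the two propositions rather than after: finite global dimension gives $D^b(\mod C)=K^b(\proj C)$, hence $j^?(A)\in K^b(\proj C)$, so the $3$-recollement extends one step downward to a $4$-recollement, and only then are Propositions~\ref{proposition-4-recoll} and~\ref{proposition-4-recoll-left-orth} applied as stated. Everything after that point in your write-up (the vanishing of $\underline{\Gproj}C$ and $\underline{^\bot C}$, essential surjectivity and full faithfulness of $\overline{i_*}$ and $\widetilde{i_*}$, and the transport of $\Ext$-vanishing back and forth, including the observation that $\Ext_B^i(N,B)=0$ comes for free in the ARC case) is correct and agrees with the paper.
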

\begin{proof}
Let $(\mathcal{D}B, \mathcal{D}A,
\mathcal{D}C, i_1,i_2, \cdots ,i_{5}, j_1, j_2, \cdots ,
j_{5})$ be a $3$-recollement. Then the assumption $\gl C<\infty$
implies that $j_4(A)\in K^b(\proj C)$, that is,
the $3$-recollement can be extended one step downward.
By Proposition~\ref{proposition-4-recoll} and Proposition~\ref{proposition-4-recoll-left-orth},
there are two lower recollements $(\underline{\Gproj }B,\underline{\Gproj }A,\underline{\Gproj }C)$
and $(\underline{^\bot B}, \underline{^\bot A}, \underline{^\bot C})$.
On the other hand, the contidion $\gl C<\infty$
implies $\underline{\Gproj }C=0$ and $\underline{^\bot C}=0$.
Therefore, there are (left) triangle equivalence
$\underline{\Gproj }B\cong \underline{\Gproj }A$ and $\underline{^\bot B}\cong
\underline{^\bot A}$. Now, the theorem can be proved
by the same argument as Theorem~\ref{theorem-3-GPC-ARC}.

\end{proof}

\begin{theorem}\label{theorem-4}
Let $A$, $B$ and $C$ be Artin $R$-algebras,
and $\mathcal{D}A$ admit a $4$-recollement relative
to $\mathcal{D}B$ and $\mathcal{D}C$. Then the following hold true:

{\rm (1)} If one of $B$ and $C$
is CM-free, and the other satisfies Gorenstein projective
conjecture, then $A$
satisfies Gorenstein projective
conjecture;

{\rm (2)} If $\underline{^\bot C}=0$ and
$B$ satisfies Auslander-Reiten
conjecture, then $A$
satisfies Auslander-Reiten
conjecture, and the statement is also
true if we exchange the roles of $B$ and $C$.

\end{theorem}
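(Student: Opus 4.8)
The plan is to run the $\Ext$-comparison of Theorem~\ref{theorem-3-GPC-ARC} \emph{backwards}. There, a fully faithful stable functor is used to push the conjecture from $A$ onto an outer algebra; here I want to pull it from an outer algebra back to $A$. The new ingredient that makes this possible is that the vanishing of one of the two outer stable categories collapses the induced recollement, upgrading the relevant stable functor from a fully faithful embedding to a (left) triangle equivalence.

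For part (1), I would start from the lower recollement $(\underline{\Gproj}B, \underline{\Gproj}A, \underline{\Gproj}C)$ of Proposition~\ref{proposition-4-recoll}, with functors $\overline{i_*}, \overline{i^!}, \overline{j^*}, \overline{j_*}$, having normalised $i_*, j_*$ (and, after a shift, $i^!, j^*$) to be nonnegative. If $B$ is CM-free then $\underline{\Gproj}B = 0$, so the recollement triangle $\overline{i_*}\,\overline{i^!}Z \to Z \to \overline{j_*}\,\overline{j^*}Z \to$ gives $Z \cong \overline{j_*}\,\overline{j^*}Z$ for all $Z$; since $\overline{j_*}$ is a full embedding this makes $\overline{j^*}\colon \underline{\Gproj}A \to \underline{\Gproj}C$ a triangle equivalence. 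Dually, if $C$ is CM-free then $\underline{\Gproj}C = 0$ and $\overline{i_*}\colon \underline{\Gproj}B \to \underline{\Gproj}A$ is a triangle equivalence. In either case $\underline{\Gproj}A$ is triangle equivalent to the stable category of the algebra $D \in \{B, C\}$ that satisfies GPC. Then, given $M \in \Gproj A$ with $\Ext_A^i(M, M) = 0$ for all $i > 0$, I would combine the identification $\Ext_A^i(M, M) \cong \Hom_{\underline{\Gproj}A}(\Omega^i_A M, M)$ of \cite[Lemma 3.3]{Zhang12}, the equivalence above, and the syzygy commutation $\Omega^i \overline{F} \cong \overline{F}\Omega^i$ from \cite[Corollary 4.12]{HP17} (for the nonnegative $\overline{F} \in \{\overline{i_*}, \overline{j^*}\}$) to transport the vanishing into $\Ext_D^i(N, N) = 0$ for all $i > 0$, where $N$ is the image of $M$ under the equivalence. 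GPC for $D$ forces $N$ projective, i.e. $N = 0$ in $\underline{\Gproj}D$; the inverse equivalence then gives $M = 0$ in $\underline{\Gproj}A$, so $\pd M < \infty$, and being Gorenstein projective $M$ is projective. Hence $A$ satisfies GPC.

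Part (2) is the same argument with $^\bot$ in place of $\Gproj$: I would use the lower recollement of left triangulated categories $(\underline{^\bot B}, \underline{^\bot A}, \underline{^\bot C})$ from Proposition~\ref{proposition-4-recoll-left-orth}, together with the $^\bot$-analogues of the $\Ext$-isomorphism and the syzygy commutation already invoked in the Auslander-Reiten half of Theorem~\ref{theorem-3-GPC-ARC}. The hypothesis $\underline{^\bot C} = 0$ collapses the recollement so that $\widetilde{i_*}\colon \underline{^\bot B} \to \underline{^\bot A}$ is a left triangle equivalence, and ARC for $B$ then forces every $M \in {}^\bot A$ with $\Ext_A^i(M, M \oplus A) = 0$ (for all $i > 0$) to be projective; swapping $B$ and $C$, so that $\underline{^\bot B} = 0$ and $\widetilde{j^*}$ is the equivalence, yields the symmetric statement.

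The main obstacle is not the homological transfer, which is just Theorem~\ref{theorem-3-GPC-ARC} read in reverse, but rather the bookkeeping around the collapse: I must check that a vanishing outer term really promotes the fully faithful functor of Lemma~\ref{lemma-3-recoll} to a genuine equivalence in the \emph{left} triangulated setting of part (2), where the recollement triangle has to be read as a left triangle. This is precisely the collapse already exploited in the proof of Theorem~\ref{theorem-3} (where $\gl C < \infty$ forces $\underline{^\bot C} = 0$), so I would lean on that precedent, while taking care that the shift normalisation making the functors nonnegative stays compatible with the syzygy-commutation identity on both sides.
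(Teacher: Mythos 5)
Your proposal is correct and follows essentially the same route as the paper: the paper's proof likewise invokes Propositions~\ref{proposition-4-recoll} and~\ref{proposition-4-recoll-left-orth} to get the lower recollements $(\underline{\Gproj}B,\underline{\Gproj}A,\underline{\Gproj}C)$ and $(\underline{^\bot B},\underline{^\bot A},\underline{^\bot C})$, lets the vanishing outer term collapse them to (left) triangle equivalences, and then reruns the $\Ext$-transfer of Theorem~\ref{theorem-3-GPC-ARC}, exactly as you describe. You have in fact supplied more detail than the paper, which compresses all of this into ``the same argument as Theorem~\ref{theorem-3}.''
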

\begin{proof}
By Proposition~\ref{proposition-4-recoll} and Proposition~\ref{proposition-4-recoll-left-orth},
there exist two lower recollements $(\underline{\Gproj }B,\underline{\Gproj }A,\underline{\Gproj }C)$
and $(\underline{^\bot B}, \underline{^\bot A}, \underline{^\bot C})$.
Now, the theorem can be proved
by the same argument as Theorem~\ref{theorem-3}.
\end{proof}

\section{\large Examples}
\indent\indent  In this section, we give two
examples to demonstrate how our
results can be used. The first one is a $4$-recollement which can not be extended to
a $5$-recollement, and the second is reducing Auslander-Reiten conjecture
by recollement.

\begin{example}
{\rm Let $k$ be a field and $Q$ be the following quiver
$$\xymatrix{4 \ar@<-0.5ex>[r]_{\beta _1} & 5
\ar@<-0.5ex>[l]_{\alpha _1} \ar@<-0.5ex>[r]_{\beta _2}
& 6 \ar[r]_{\beta _3} \ar@<-0.5ex>[l]_{\alpha _2}
& 7  \\  1 \ar[u]_{\gamma _1} \ar@<-0.5ex>[r]_{\delta _1} & 2
\ar@<-0.5ex>[l]_{\theta _1} \ar[u]_{\gamma _2} & 3 \ar[l]_{\theta _2}}.$$
Let $A=kQ/I$ where $I$ is the ideal of $kQ$ generated by
$\alpha _1 \beta _1$, $\beta _1
\alpha _1$, $\alpha _2 \beta _2$, $\beta _2
\alpha _2$, $\beta _2 \beta _3$,
$\theta _1 \delta _1$, $\delta _1
\theta _1$, $\theta _2 \theta _1$, $\theta _2 \gamma _2$,
$\gamma _1 \beta _1- \delta _1 \gamma _2 $ and $\theta _1 \gamma _1
-\gamma _2\alpha _1$. This algebra $A$ is studied in \cite[Example 3.16]{Lu17}.
Indeed, $A = \left[\begin{array}{cc} B & 0
\\ M & C \end{array}\right] $ is a triangular matrix algebra,
where $B=(1-e)A(1-e)$, $C=eAe$, $e=e_1+e_2+e_3$.
Obviously, $\pd  _CM=1$ and $M_B$ is projective.
By \cite[Example 3.4]{AKLY17},
$\mathcal{D}A$ admits a $4$-recollement relative
to $\mathcal{D}C$ and $\mathcal{D}B$.
Now we claim that this $4$-recollement can not be extended to
a $5$-recollement. Otherwise, assume there is a $5$-recollement
of $\mathcal{D}(Mod)$-level, then, there is a recollement
of $\underline{\Gproj }$-level (ref. Proposition~\ref{proposition-4-recoll}).
However, we get a contradiction by analyzing the
Gorenstein projective modules, as \cite[Example 3.16]{Lu17}.}

\end{example}

\begin{example}
{\rm Let $k$ be a field and $Q$ be the following quiver
$$\xymatrix{4 \ar@<-0.5ex>[r]_{\beta _1} & 5
\ar@<-0.5ex>[l]_{\alpha _1} \ar@<-0.5ex>[r]_{\beta _2}
& 6 \ar[r]_{\beta _3} \ar@<-0.5ex>[l]_{\alpha _2}
& 7  \\  1 \ar@<-0.5ex>[u]_{\gamma _3} \ar@<+0.5ex>[u]^{\gamma _1} \ar@<-0.5ex>[r]_{\delta _1} & 2
\ar@<-0.5ex>[l]_{\theta _1} \ar[u]_{\gamma _2} & 3 \ar[l]_{\theta _2}}.$$
Let $A=kQ/I$ where $I$ is the ideal of $kQ$ generated by
$\alpha _1 \beta _1$, $\beta _1
\alpha _1$, $\alpha _2 \beta _2$, $\beta _2
\alpha _2$, $\beta _2 \beta _3$,
$\theta _1 \delta _1$, $\delta _1
\theta _1$, $\theta _2 \theta _1$, $\theta _2 \gamma _2$,
$\gamma _1 \beta _1- \delta _1 \gamma _2 $ and $\theta _1 \gamma _1
-\gamma _2\alpha _1$. Then $A = \left[\begin{array}{cc} B & 0
\\ M & C \end{array}\right] $,
where $B=(1-e)A(1-e)$, $C=eAe$, $e=e_1+e_2+e_3$.
Obviously, $\pd  _CM=1$ and $M_B$ is projective.
By \cite[Example 3.4]{AKLY17},
$\mathcal{D}A$ admits a $4$-recollement relative
to $\mathcal{D}C$ and $\mathcal{D}B$.

Clearly, both $B$ and $C$ are representation-finite, and
thus they satisfy the Auslander-Reiten
conjecture. On the other hand, it is easy to
see that $\underline{^\bot C}=0$.
By Theorem~\ref{theorem-4}, the Auslander-Reiten
conjecture is true for $A$, and so does the Gorenstein projective
conjecture.

We mention that the algebra $A$ doesn't belong to
one of the classes of algebras where the Auslander-Reiten
conjecture is known to
hold, e.g. algebras of finite representation type
and symmetric algebras with radical cube zero.}

\end{example}
\bigskip

\noindent {\footnotesize {\bf ACKNOWLEDGMENT.} This work is supported by
the National Natural Science Foundation of China (11701321, 11601098) and Yunnan Applied Basic Research
Project 2016FD077.}

\footnotesize

\end{document}